\newtheorem{proposition}{Proposition}[section]
\newtheorem{corollary}[proposition]{Corollary}
\newtheorem{theorem}[proposition]{Theorem}
\newtheorem{lemma}[proposition]{Lemma}
\theoremstyle{definition}
\newtheorem{example}{Example}
\newenvironment{remark}
  {\pushQED{\qed}\remarkx}
  {\popQED\endremarkx}
\newtheorem{xx}{\bf xxx}
\newcommand{\Z}{\mathbb{Z}}
\newcommand{\R}{\mathbb{R}}
\newcommand{\N}{\mathbb{N}}
\DeclareMathAlphabet{\mathpzc}{OT1}{pzc}{m}{it}
\newcommand{\ind}[1]{\mathbbm{1}_{\{#1\}}} 
\newcommand{\indset}[1]{\mathbbm{1}_{#1}} 
\providecommand{\abs}[1]{\lvert#1\rvert}
\providecommand{\norm}[1]{\lVert#1\rVert}
\newcommand{\wh}{\widehat}
\newcommand{\cadlag}{c\`adl\`ag{ }}
\DeclareMathOperator{\Span}{span}
\numberwithin{equation}{section}
\newcommand{\subjclass}[2][2000]{%
  \let\@oldtitle\@title%
  \gdef\@title{\@oldtitle\footnotetext{#1 \emph{Mathematics subject classification:} #2}}%
}
\newcommand{\keywords}[1]{%
  \let\@@oldtitle\@title%
  \gdef\@title{\@@oldtitle\footnotetext{\emph{Key words and phrases:} #1.}}%
}
\begin{document}
\title{\LARGE Duality and the well-posedness of a martingale problem}

\author{\sc Andrej Depperschmidt, Andreas Greven, Peter Pfaffelhuber}

\date{\today}

\subjclass{60J25, 60J35 
}

\keywords{duality, martingale problems,
  construction of solutions of martingale problems, transition semigroups, Fleming-Viot
  process, Cannings process, branching process, spatial population models}
\maketitle

\begin{abstract}
  \noindent
  For two Polish state spaces $E_X$ and $E_Y$, and an operator $G_X$, we obtain existence and uniqueness of a $G_X$-martingale problem provided there is a bounded continuous duality function $H$ on $E_X \times E_Y$ together with a dual process $Y$ on $E_Y$ which is the unique solution of a $G_Y$-martingale problem. For the corresponding solutions  $(X_t)_{t\ge 0}$ and $(Y_t)_{t\ge 0}$, duality with respect to a function $H$ in its simplest form means that the relation $\mathbb E_x[H(X_t,y)] = \mathbb E_y[H(x,Y_t)]$ holds for all $(x,y) \in E_X \times E_Y$ and $t\ge 0$. While duality is well-known to imply uniqueness of the $G_X$-martingale problem, we give here a set of  conditions under which duality also implies existence without using approximating sequences of processes of a different kind (e.g.\ jump processes to approximate diffusions) which is a widespread strategy for proving existence of solutions of martingale problems. Given the process $(Y_t)_{t\ge 0}$ and a duality function $H$, to prove existence of $(X_t)_{t\ge 0}$ one has to show that the r.h.s.\ of the duality relation defines for each $y$ a measure on $E_X$, i.e.\ there are transition kernels $(\mu_t)_{t\geq 0}$ from $E_X$ to $E_X$ such that $\mathbb E_y[H(x,Y_t)] = \int \mu_t(x,dx')\, H(x',y)$ for all $(x,y) \in E_X \times E_Y$ and all $t\geq 0$.

  As examples, we treat resampling and branching  models, such as the Fleming-Viot measure-valued diffusion and its spatial counterparts (with both, discrete and continuum space), as well as branching systems, such as Feller's branching diffusion. While our main result as well as all examples come with (locally) compact state spaces, we discuss the strategy to lift our results to genealogy-valued processes or historical processes, leading to non-compact (discrete and continuum) state spaces. Such applications will be tackled in forthcoming work based on the present article.
\end{abstract}

\section{Introduction}
A general method for constructing a class of time-homogeneous Markov processes on a Polish state space $E$ with measurable paths is by using \emph{martingale problems}, which we briefly recall.

Given a linear operator $G$ on a domain $\mathcal D$ which is a subspace of measurable, real-valued functions on $E$, and an initial law $\mathbb P_0 \in \mathcal M_1(E)$, the set of probability measures on $E$, we say that the distribution $\mathbb P$ of an $E$-valued \emph{progressively measurable} stochastic process $Z$ solves the martingale problem for $(G,\mathcal D,\mathbb P_0)$, if $\mathbb P(Z_0 \in \cdot\, ) = \mathbb P_0(\,\cdot\,)$ and
\begin{align}
  \label{eq:5}
  \Big(f(Z_t) - \int_0^t Gf(Z_s)\,ds\Big)_{t\geq 0}
\end{align}
is a $\mathbb P$-martingale (with respect to the filtration generated by $X$) for all $f \in \mathcal D$. By a martingale problem for $(G,\mathcal D,\,z)$ for a $z \in E$, we mean the martingale problem with initial measure $\mathbb P_0 = \delta_z$. The martingale problem for $(G,\mathcal{D},\mathbb P_0)$ is called \emph{well-posed}, if a solution exists and is unique. We say that the martingale problem for $(G,\mathcal{D})$ is well-posed if the martingale problem for $(G,\mathcal{D},\mathbb P_0)$ is well-posed for all $\mathbb P_0 \in \mathcal M_1(E)$.

\begin{remark}[Path regularity]
  \label{rem:R1}
  Recall that a solution of a martingale problem must have a modification with measurable paths to ensure existence of the integral in \eqref{eq:5} \cite[Section 4.3]{EK86}, and therefore has a progressively measurable modification \cite[Proposition 1.12]{KaratzasShreve1991}. So, without losing generality, the above definition uses a \emph{strong form of uniqueness} and a \emph{weak form of existence of solutions}, compared to formulations where it is required that the solutions have c\`adl\`ag paths, or where the initial states are restricted to be deterministic. This is  convenient because then we obtain a unique solution for which we have to prove \emph{regularity properties of paths separately}. For the latter, recall that on general state spaces, Theorem~4.3.6 in \cite{EK86} ensures the existence of a \cadlag modification of the solution of the $(G_X,\mathcal H_X)$ martingale problem provided the compact containment condition holds. 
\end{remark}

Duality, which we recall below, is a technique often used to show \emph{uniqueness} of solutions of a martingale problem. For \emph{existence} however, a typical strategy is to construct a tight sequence $Z^1,Z^2,\dots$ of approximating processes (typically some pure jump Markov processes), to prove tightness of the laws and to show that every limit point solves the martingale problem. The main goal of the paper is to use duality also for \emph{existence} of solutions of martingale problems; see Theorem~\ref{T1}. This approach avoids approximations with processes of a different nature than the solutions of the martingale problem. Note however that we also provide in Corollary~\ref{cor.480} a method to obtain solutions by approximations where the existence (and uniqueness) of the approximating sequences themselves is obtained using duality.

Two processes $X$ and $Y$ with Polish state spaces $E_X$ and $E_Y$, which arise as solutions of martingale problems $(G_X,\mathcal{D}_X)$ respectively $(G_Y,\mathcal{D}_Y)$, are said to be \emph{dual with respect to} a bounded, continuous function $H: E_X \times E_Y \to \R$, if
\begin{align}
  \label{eq:dual1}
  \mathbb E_{\mathbb P_0}[H(X_t,y)]
  = \int_{E_X} \mathbb E_y[H(x,Y_t)]\, \mathbb P_0(dx), \;
  \quad \mathbb P_0 \in \mathcal{M}_1(E_X), \; y \in E_Y,
\end{align}
where $\mathbb E_{\mathbb P_0}[\cdot]$ and $\mathbb E_y[\cdot]$ denote the expectations with respect to the initial conditions $X_0 \sim \mathbb P_0$ and $Y_0=y$, respectively. In particular, properties of $X$ can be read off from properties of $Y$ and vice versa. (We note that more general notions of duality exist, where one or both sides of \eqref{eq:dual1} contain an exponential penalty term, usually called Feynman-Kac-term; see \eqref{eq:toshow} below. Also, the boundedness of $H$ can be relaxed in which case some additional \emph{integrability conditions} have to be checked; see Remark~\ref{rem:genH}.)

Usually, \eqref{eq:dual1} is proved as follows (cf.\ (4.39)--(4.42) in Chapter~4 in \cite{EK86}): If $G_X$ and $G_Y$ are operators with domains $\mathcal D_X \supseteq \mathcal H_X \coloneqq \{H(\cdot,y):y\in E_Y\}$ and $\mathcal D_Y \supseteq \mathcal H_Y \coloneqq \{H(x,\cdot):x\in E_X\}$, respectively, and if $X$ and $Y$ are solutions of the corresponding martingale problems then \eqref{eq:dual1} is equivalent to
\begin{align}
  \label{eq:dual2}
  G_XH(\cdot,y)(x) = G_YH(x,\cdot)(y), \qquad x\in E_X, y\in E_Y.
\end{align}

In order to see that this suffices for \eqref{eq:dual1}, take a probability space where $X$ and $Y$ are independent and conclude from \eqref{eq:dual2} that
\begin{align}
  \label{eq:dual3}
  \frac{d}{ds} \mathbb E[H(X_s, Y_{t-s})] = \mathbb E[G_XH( \cdot , Y_{t-s})(X_s)] - \mathbb E[G_YH(X_s,  \cdot)(Y_{t-s})] = 0.
\end{align}
In addition, \eqref{eq:dual2} is necessary for \eqref{eq:dual1} since for $x\in E_X, y\in E_Y$
\begin{align*}
  G_XH(\cdot,y)(x) - G_YH(x,\cdot)(y) = \lim_{h\to 0} \frac 1h\left(\mathbb E_{\delta_x}[H(X_t, y)] - H(x,y) - \mathbb E_{\delta_y}[H(x, Y_t)] + H(x,y)\right) = 0.
\end{align*}

A classical result addresses the uniqueness of the martingale problem for $(G_X,\mathcal{D}_X,\mathbb P_0)$; see e.g.\ Proposition~4.4.7 and Remark 4.4.8 in \cite{EK86}. If $E_X$ and $E_Y$ are Polish, $\mathcal H_X$ is separating on the space of probability measures on $E_X$, and if for every $y\in E_Y$, there exists a solution $Y$ of the martingale problem for $(G_Y,\mathcal H_Y,y)$, and if \eqref{eq:dual1} holds for all $x\in E_X$ with $\mathbb P_0=\delta_x$ and $y\in E_Y$, then \emph{uniqueness} of the martingale problem for $(G_X,\mathcal H_X,x)$ holds. Also uniqueness of the more general martingale problems for $(G_X,\mathcal H_X,\mathbb P_0)$ with \emph{random} initial conditions $\mathbb P_0 \in \mathcal M_1(E_X)$ holds. The reason is that the duality relation \eqref{eq:dual1} and separability of $\mathcal H_X$ specify the one-dimensional distributions of $X$ uniquely, and therefore, by \cite[Theorem 4.4.2]{EK86}, uniqueness of the martingale problem follows.

Duality is also very useful if $Y$ is a much simpler process than $X$, because questions concerning the behaviour of $X$ can be translated to questions about $Y$. For example, duality can be used to show the \emph{Feller property of $X$}, or to determine its \emph{longtime behaviour} and characterize \emph{equilibria}. Duality was the key tool for studying interacting particle systems such as the voter model and the contact process \citep{Liggett:85}, but also for measure-valued processes such as the Fleming-Viot process (which is dual to some form of coalescent process; see also Examples~\ref{ex:FV}, \ref{ex:FVmut} and \ref{ex.1249}), and the Dawson-Watanabe superprocess (which is dual to the solution of a deterministic process given by a $\log$-Laplace equation) \cite{D93,Etheridge2000}. For a general reference on duality for Markov processes including various sorts of applications see \cite{EK86,JansenKurt2014} and references therein.

\medskip

The idea to use duality for the \emph{existence of a solution of a martingale problem} was motivated by constructions appearing in the literature. To the best of our knowledge, the first examples appear in \cite{EvansFleischmann1996, Evans1997}, where duality is used to show existence of the continuum space version of interacting Fisher-Wright diffusions on the discrete hierarchical group, by lifting the duality relation from the corresponding discrete case. This has been studied in on $\Z^1$ and $\R^1$ with other methods in \cite{KonnoShiga1988,Shiga1994,MullerTribe1995}. We believe that our approach provides proofs of theses results (when formulated differently) as well. The approach using duality is also used in \cite{GHKK14} to construct a spatial Cannings model, and in \cite{BEV10,EtheridgeVeberYu2020} for the construction of a model with locally constant population size in a spatial continuum. For a branching process, Dynkin gave in \cite{Dynkin1993} -- what he called -- a direct construction, which can be viewed as a construction based on the deterministic dual (as opposed to the construction via particle approximations in \cite{D93} for example).

We give here a systematic approach to the existence problem together with some examples. Let us briefly describe the idea for showing existence by using a dual process; see Theorem~\ref{T1} for all details: We are given the $(G_X,\mathcal{D}_X,\mathbb P_0)$ martingale problem for which we want to establish well-posedness. We look both for a Markov process $Y$ and a function $H$ for which the relation \eqref{eq:dual2} holds. Then we define the operator $P_t$ on $\mathcal H_X$ by setting $(P_tH(\cdot,y))(x) \coloneqq \mathbb E_y[H(x,Y_t)]$, which defines an operator on $\mathcal H_X$. Then $P_t$ inherits the semigroup property $P_t\circ P_s = P_{t+s}$ from the semigroup of the dual process $Y$. The semigroup $(P_t)_{t\geq 0}$ will be the semigroup of some process $X$, provided there is a probability measure $\mathbb P_x$ (with expectation $\mathbb E_x$) and for each $t \ge 0$ a random variable $X_t$ such that
\begin{align}\label{eq:a}
  \mathbb E_x[H(X_t,y)] = (P_tH(\cdot,y))(x) \, (\coloneqq \mathbb E_y[H(x,Y_t)]).
\end{align}
Then $(P_t)_{t\geq 0}$ is a Markov semigroup and we have \emph{existence} of a solution of the martingale problem for $(G_X,\mathcal H_X,x)$ provided some additional measurabiliy property holds. Moreover, if duality is derived from the operator criterion it also implies uniqueness. Altogether we obtain \emph{well-posedness} of the martingale problem for $(G_X,\mathcal H_X,x)$ for each $x \in E_X$. From that we obtain the well-posedness of the $(G_X,\mathcal H_X,\mathbb P_0)$ martingale problem for any $\mathbb P_0 \in \mathcal M_1(E_X)$. At least on compact state spaces, the existence of a \cadlag modification is immediate. 

The main requirement in applying our main result, namely Theorem~\ref{T1}, is to find (the distribution of) $X_t$ satisfying \eqref{eq:a}. For this, we provide two general approaches, one based on the Riesz-Markov Theorem in Proposition~\ref{l.480}, which requires \emph{compact} state spaces. In various applications, relaxing the assumption of compactness of $E_X$ is the main challenge. An approach in this direction is Proposition~\ref{l.480a} which requires $E_Y$ to be a set of functions on some compact set.

\medskip

For the construction of a solution of a martingale problem using duality we give several examples. Since our motivation came from \cite{Evans1997}, we also discuss here resampling systems with our approach. Namely in Examples~\ref{ex:FV}-- \ref{ex:slvf}, we show how our results can be used for the (spatial) Fleming-Viot process (with mutation) and the Cannings model, as well as the spatial $\Lambda$-Fleming-Viot process from \cite{Barton_2013}. In addition, we adapt arguments from \cite{Dynkin1993} and \cite{Beznea2011} in order to show existence in a continuous state branching model; see Example~\ref{ex:CSBP}. We also give an example how to use a Feynman-Kac term, by using the duality of the Feller branching diffusion to a Kingman coalescent; see Example~\ref{ex:Feller}. 

In future work, we want to systematize the approach to be able to construct \emph{genealogy-valued} processes based on martingale problems as introduced in \cite{GPWmp13,DGP12,GSW} and which could be generalized to genealogy-valued Fleming-Viot models with \emph{recombination} using arguments of the present paper to construct and characterize these new processes. Compare here also Section \ref{ss.outcomp} for more details. Another possibility is to use the approach to construct \emph{continuum space} dynamics, which was also the original motivation in \cite{EvansFleischmann1996} and this is taken up in work of Etheridge and coathors on $\R^d$ \cite{EtheridgeVeberYu2020} and in \cite{GSW} and subsequently on the continuum space hierarchical group extending \cite{GHKK14}.

\begin{remark}[Other methods for showing existence]
  Let us discuss two more options to show existence of solutions of a $(G_X,\mathcal D_X, \mathbb P_0)$ martingale problem without using a tight sequence of approximating processes: the positive maximum principle and the Girsanov transform. 
  \\For the former, consider locally compact $E_X$. Here, if (i) $G_X$ satisfies the \emph{positive maximum principle} (i.e.\ if $f\in\mathcal D_X$ and $x_0\in E$ such that $\sup_x f(x) = f(x_0)\geq 0$, then $G_Xf(x_0)\leq 0$) and (ii) is conservative (i.e.\ there is $f_1, f_2,\dots \in\mathcal D_X$ with $f_n \xrightarrow{n\to\infty} 1$ and $Gf_n \xrightarrow{n\to\infty} 0$ boundedly pointwise), existence follows (see e.g.\ \cite[Theorem~4.5.4 and Remark~4.5.5]{EK86}). However, we note that the positive maximum principle is very often not straight-forward to verify, for example in systems with infinitely many components.
  
  The \emph{Cameron-Martin-Girsanov theorem} is another way to show existence of solutions of the $(G_X, \mathcal D_X, \mathbb P_0)$ martingale problem for, given (i) existence of a process $Z$, (ii) a mean-1-martingale $M\geq 0$ and (iii) a proof that $(M\cdot \mathbb P)_\ast Z$ (here $M\cdot \mathbb P$ denotes the probability measure with density $M$ with respect to $\mathbb P$) solves the $(G_X,\mathcal D_X, \mathbb P_0)$ martingale problem. However, it might here be necessary to prove existence of the process $Z$ by some other methods, for instance by again using approximation techniques or the positive maximum principle.
\end{remark}

For future reference we introduce in the following remark the notation used throughout the paper. The reader might skip it and return to it if the notation that we use is not familiar.
\begin{remark}[Notation and some basic concepts]
  Throughout, let $(E,r)$ be a complete and separable metric space. Also, let $\mathcal C_b(E)$ and $\mathcal B(E)$ be the spaces of real-valued, continuous and bounded respectively bounded measurable functions.  With a slight abuse of notation, we also write $\mathcal B(E)$ for the set of Borel-measurable subsets of $E$. On $\mathcal C_b(E)$, we use the supremum norm $\norm{\cdot}$ and equip $\mathcal C_b(E)$ with the bounded pointwise (bp)-topology where $f_n \to f$ iff $\sup_n \norm{f_n} < \infty$ and $f_n\to f$ pointwise. We denote by $\mathcal M(E)$ ($\mathcal M_1(E)$) the space of (probability) Radon measures on $E$. If $E$ is locally compact, we denote by $\widehat C(E) \subseteq \mathcal C_b(E)$ the set of continuous functions vanishing at infinity. For $E$-valued random variables $Y,Z$, we write $Y\sim Z$ or $Y \sim \mathcal L(Z)$ if $Y$ and $Z$ have the same distribution.

  We say that $\Pi \subseteq \mathcal C_b(E)$ is separating (on $\mathcal M_1(E)$) if for all $\mu, \nu \in\mathcal M_1(E)$, ($\int f d\mu = \int f d\nu$ for all $f\in \Pi)\Rightarrow \mu = \nu$ holds, and convergence determining (in $\mathcal M_1(E)$) if, for all $\mu, \mu_1,\mu_2,\dots$, ($\int f d\mu_n \xrightarrow{n\to\infty} \int f d\mu$ for all  $f\in\Pi$) $\Rightarrow (\mu_n \xRightarrow{n\to\infty} \mu$) holds.

  Recall that a semigroup $(P_t)_{t\geq 0}$ on a vector space $\mathcal D \subseteq \mathcal C_b(E)$ is a family of bounded linear functions $P_t: \mathcal D \to \mathcal B(E)$ such that $P_t(P_s f) = P_{t+s}f$ for all $t,s\geq 0$ and $f\in \mathcal D$ with $P_sf\in \mathcal D$. The operator $P_t$ (or the semigroup $(P_t)_{t\geq 0}$) is a contraction if $\norm{P_tf} \leq \norm{f}$ (for all $t\geq 0$). It is positive if $P_tf\geq 0$ for $f\geq 0$. It is conservative if $P_t1 = 1$. A semigroup $(P_t)_{t\geq 0}$ is called strongly continuous if $P_tf \xrightarrow{t\to 0}f$ for all $f\in\mathcal C_b(E)$. If a conservative, positive, strongly continuous contraction semigroup satisfies $P_tf \in \mathcal C_b(E)$ for $f\in\mathcal C_b(E)$ and $t\geq 0$, we call $(P_t)_{t\geq 0}$ a $\mathcal C_b(E)$-Feller semigroup. If the same holds for locally compact $E$ with $\widehat{\mathcal C}(E)$ instead of $\mathcal C_b(E)$, then we say that $(P_t)_{t\geq 0}$ is a $\widehat{\mathcal C}(E)$-Feller semigroup. The generator of a semigroup $(P_t)_{t\geq 0}$ is given by $Gf(x) = \lim_{t\to 0} \tfrac 1t (P_tf(x) -f(x))$, whenever the limit exists boundedly pointwise. The set $\mathcal D(G)$ of functions for which the limit exists boundedly pointwise is referred to as the domain of the generator $G$.

  Recall that with any time-homogeneous Markov process $X=(X_t)_{t\ge 0}$ on a state space $E$ we can associate a semigroup $P=(P_t)_{t\geq 0}$ with $P_tf(x) = \mathbb E_x[f(X_t)]$ satisfying the Chapman-Kolmogorov  equations $P_tP_sf = P_{t+s}f$ for $s,t\geq 0$. This semigroup is a positive, conservative contraction. We say that $X$ is a Feller process if its semigroup is Feller (with respect to either $\mathcal C_b(E)$ or $\widehat{\mathcal C}(E)$).
\end{remark}

\section{Results}
We will first present in Theorem~\ref{T1} the general result on the well-posedness of a martingale problem using duality in Section~\ref{ss.princres}. Then in Section~\ref{ss.checkT1}, we will discuss how to check the assumptions appearing in Theorem~\ref{T1}.  In Section~\ref{ss.combine}, we show how our results can be applied to processes whose generators consist of sums of generator terms each of which corresponds to different mechanisms of the process and which we can characterize by a martingale problem for which we have a duality. Proofs or arguments for results are found in Section~\ref{s.proofresults}. Several examples are treated in Section~\ref{s.examples}. Finally, in the Outlook-Section~\ref{ss.outcomp} we discuss how the restrictions of our results to compact state spaces can be used for non-compact and in particular non locally compact cases by checking additional conditions.

\subsection{The principal result}
\label{ss.princres}
Theorem~\ref{T1} below is our main result for showing existence of solutions of martingale problems. We will say that two processes $X$ and $Y$ (with state spaces $E_X$ and $E_Y$ are in $H$-duality (for some $H: E_X\times E_Y \to\mathbb R$) with potential $\beta: E_Y\to\mathbb R$ if
\begin{align}
    \label{eq:dualHbeta}
    \mathbb E_x[H(X_t, y)] = \mathbb E_y\Big[H(x,Y_t)\exp\Big( - \int_0^t \beta(Y_r)dr\Big) \Big].
\end{align}
Note that -- in contrast to the introduction -- we are dealing with the slightly more complex situation because \eqref{eq:dualHbeta} involves (in contrast to \eqref{eq:dual1}) an extra term on the right-hand-side, often referred to as a \emph{Feynman-Kac term)}, denoted here by $\beta$. In various applications which we present in Section~\ref{s.examples}, we will have $\beta=0$; in Example~\ref{ex:Feller} we treat a case for $\beta \neq 0$. The proof of the following result is given in  Section~\ref{s.proofresults}. 

\begin{theorem}[A semigroup property and existence by duality]\label{T1}
  Let $E_X, E_Y$ be Polish, $H: E_X\times E_Y \to \R$ bounded and continuous, and $G_Y : \mathcal H_Y \to \mathcal C_b(E_Y), \beta \in \mathcal C_b(E_Y)$. Define $\mathcal H_X \coloneqq \{H(\cdot,y): y\in E_Y\}$ and $\mathcal H_Y \coloneqq \{H(x,\cdot): x\in E_X\}$. 
  \begin{enumerate}[(i)]
      \item \label{th1i} Suppose that for each $y\in E_Y$ there is an $E_Y$-valued Markov process $Y$ with a strongly continuous semigroup, which is the unique solution of the $(G_Y,\mathcal{H}_Y, y)$-martingale problem. Then, the family $(P_t)_{t\geq 0}$, defined on the closure of $\text{span}(\mathcal H_X)$, given by
      \begin{align}\label{eq:t1a}
          P_t H(.,y) := \mathbb E_{y}\Big[ H(., Y_{t}) \exp \Big( - \int_0^{t} \beta(Y_r) dr\Big) \Big],
      \end{align}
      is a semigroup. Assume that its generator $G_X$ has domain $\mathcal D_X \supseteq \mathcal H_X$ and  satisfies 
      \begin{align}
        \label{eq:dual3new}
        G_XH(\cdot,y)(x) = G_Y H(x,\cdot)(y) + \beta(y) H(x,y), \quad
        x\in E_X,\, y\in E_Y.
      \end{align}    
    \item \label{th1ii} In addition, assume that $\Span(\mathcal H_X)$ is separating on $\mathcal M_1(E_X)$ and there exists a family $(\mu_t)_{t\geq 0}$ of probability kernels from $E_X$ to $E_X$ such that, for all $\Gamma \in \mathcal B(E_X)$,
    \begin{align}
      \label{eq:toshow0}
      (t,x)\mapsto \mu_t(x,\Gamma) \text{ is $\mathcal B([0,\infty)
      \times E_X) -\mathcal B([0,1]) $ measurable, }
    \end{align}
    and for all $y\in E_Y$ and $t\geq 0$ the \emph{kernel representability} condition
    \begin{align}
      \label{eq:toshow}
      P_t H(.,y) = \int_{E_X} \mu_t(.,dx') H(x',y)
    \end{align} 
    holds. Then, for each $x\in E_X$, there exists a Markov process $X = (X_t)_{t\geq 0}$ starting in $x$ and having transition kernels $(\mu_t)_{t\geq 0}$, i.e.\ the right hand side of  \eqref{eq:toshow} equals $\mathbb E_x[H(X_t,y)]$. In particular, $X$ and $Y$ are in duality w.r.t.\ $H$ and potential $\beta$. Moreover, the process $X$ is the unique solution of the martingale problem for $(G_X,{\mathcal H}_X,x)$ and the martingale problem for $(G_X,{\mathcal H}_X)$ is well-posed.    
  \item Finally, if $\Span(\mathcal{H}_X)$ is convergence determining, then $X$ is $\mathcal C_b(E_X)$-Feller.
  \end{enumerate}
\end{theorem}

We note that Theorem~\ref{T1} is concerned with martingale problems for $(G_X,{\mathcal H}_X)$ and does not make any statements about existence of solutions of the martingale problem for $(G_X,\mathcal D_X)$ at this point. The uniqueness of the solution is of course immediate. The  step from $\mathcal{H}_X$ to $\mathcal{D}_X$ is an application of general theory; see Section~4.3 in \cite{EK86}. Using Proposition~4.3.1 of~\cite{EK86} we obtain the following corollary to Theorem~\ref{T1}.

\begin{corollary}[Well-posedness of martingale problems]
  Assume that the
  \begin{align}
    \label{eq:38}
    \text{bp-closures of } \;
    \{(f, G_Xf): f\in \mathcal{H}_X\} \; \text{ and of } \;
    \{(f, G_Xf): f\in \mathcal D_X\} \; \text{ agree},
  \end{align}
  and that the assumptions of Theorem~\ref{T1} are satisfied. Then the martingale problem for $(G_X,\mathcal D_X)$ is
  well-posed.
\end{corollary}

In the case of a locally compact state space $E_X$, recall from Theorem~4.2.7 in \cite{EK86} that $\wh{\mathcal C}(E_X)$-Feller semigroups generate strong Markov processes with \cadlag paths. We give the corresponding result in our case only for compact state spaces, since Theorem~\ref{T1}(ii) in general only gives the $\mathcal C_b(E_X)$-Feller property.

\begin{corollary}[Path regularity]
  \label{p.loccomp-cadl}
  Let $E_X$ be compact and let the assumptions of Theorem~\ref{T1} be satisfied and assume that $X$ is the process obtained in Theorem~\ref{T1}(a). If $\Span(\mathcal{H}_X)$ is convergence determining and $Y$ is $\mathcal C_b(E_Y)$-Feller, then $X$ has a modification with \cadlag paths.
\end{corollary}

\begin{proof}
  By Theorem~\ref{T1}(ii), $X$ is ${\mathcal C}(E_X)$-Feller, since $E_X$ is compact. Then, the result follows from Corollary~4.3.7 in \cite{EK86}.
\end{proof}

\begin{remark}[More general choices of $H$]
  \label{rem:genH}
  In Theorem~\ref{T1} we have assumed that $H$ is a bounded and continuous function. By inspection of its proof one can see that the boundedness assumption is only used in a calculation that uses a Fubini argument. Thus, if $H$ is nonnegative or if the condition  
  \begin{align}
    \label{eq:toshow-inf}
    \mathbb E_y\Big[\abs{H(x,Y_t)} \exp\Big(\int_0^t \beta(Y_s)\,
    ds\Big)\Big] <\infty,  \quad y \in E_Y, \; t \ge 0,
  \end{align}
  is fulfilled, then the assertions of Theorem~\ref{T1} remain true for unbounded $H$.
\end{remark}

\subsection{Checking the conditions of Theorem~\ref{T1}}
\label{ss.checkT1}
In this subsection we discuss \emph{how to check the conditions} of Theorem~\ref{T1}. First, let us note that in Theorem~\ref{T1}, it is not a restriction to assume that $Y$ is Markovian because the Markov property of $Y$ follows from the well-posedness of the martingale problem $(G_Y,\mathcal{H}_Y)$; see e.g.\ Theorem~4.4.2 in \cite{EK86}. Thus, if we have a process $Y$ and a function $H$ satisfying the generator relation \eqref{eq:dual3new}, it remains to check the assumptions \eqref{eq:toshow0} and \eqref{eq:toshow}.

In the following proposition we provide sufficient conditions for the measurability assumption \eqref{eq:toshow0}. The proof can be found in Section~\ref{s.proofresults}.

\begin{proposition}[Sufficient conditions for \eqref{eq:toshow0}]
  \label{l3}
  Let $H$, $\mathcal{H}_X$, $Y=(Y_t)_{t\geq 0}$ Markov with a strongly continuous semigroup be as in Theorem~\ref{T1}, and let $(\mu_t)_{t\geq 0}$ be as in \eqref{eq:toshow}. If $\Span(\mathcal{H}_X)$ is convergence determining, then the following assertions hold:
  \begin{enumerate}[(i)]
  \item The mapping
    $(t,x)\mapsto \mu_t(x,\cdot) \in \mathcal M_1(E_X)$ is continuous.
  \item For all $x\in E_X$, $\Gamma \in \mathcal B(E_X)$, the mapping
    $(t,x) \mapsto \mu_t(x,\Gamma)$ is measurable, i.e.\
    \eqref{eq:toshow0} holds.
  \end{enumerate}
\end{proposition}

\noindent
The key condition that remains to be checked is \eqref{eq:toshow}.
For this, recall the following version of the \emph{Riesz-Markov theorem}: If $E$ is compact and $P: \mathcal C(E) \to \mathcal C(E)$ is linear, positive (i.e.\ $f\geq 0$ implies $Pf\geq 0$) and $P1 = 1$, there is $\mu \in\mathcal P(E)$ such that $Pf = \int f d\mu$. We will use this theorem in two ways (always for fixed $x\in E_X$ and $t\geq 0$, and denoting the left hand side of \eqref{eq:toshow} by $P_t^xy$).

\emph{First} (see Proposition~\ref{l.480}), set $E = E_X$ and assume that span$(\mathcal H_X)$ is dense in $\mathcal C(E_X)$. Then, if the map on span$(\mathcal H_X)$, given by $H(,.y)\mapsto P_t^xy$, is positive, we can extend this to $\mathcal C(E_X)$ and find the corresponding $\mu_t(x,.)$.

\emph{Second}, (see Proposition~\ref{l.480a}), assume that $E_Y$ is a vector space which is dense in $\mathcal C(E_U)$ for some compact $E_U$, and assume that $y\mapsto P_t^xy$ is a positive linear form. Then, we find some $U$-valued random variable (due to the Riesz-Markov theorem) with $P_t^x y = \mathbb E[y(U)]$. Now, if it is possible to find $X$ with $\mathbb E[H(X,y)] = \mathbb E[y(U)]$, we can take $\mu_t(x,.)$ as the distribution of $X$.

We note that many measure-valued processes of interest lead to compact (or locally compact) state spaces. However, recall that the \emph{historical processes} (see \cite{DP91}) for branching models and for Fleming-Viot processes have state spaces which are \emph{not} locally compact. The same is typically true for genealogy-valued processes; see Remark~\ref{ss.outcomp} and \cite{GPWmp13,DGP12,GSW}. Strategies how to use Theorem~\ref{T1} in such situations are discussed in Section~\ref{ss.outcomp}.

\medskip
\noindent
Now, we give two conditions which can be used to show
\eqref{eq:toshow}.

\begin{proposition}[A way to check condition \eqref{eq:toshow} for compact $E_X$]
  \label{l.480}
  Let $E_X$, $E_Y$, $H$, $\beta$, $Y$ and $P_t$ be as in Theorem~\ref{T1}(i). Assume that
  \begin{enumerate}[(i)]
  \item $E_X$ is compact;
  \item $\Span(\mathcal{H}_X) \subseteq {\mathcal C}(E_X)$ is a convergence determining algebra (i.e.\ it is closed under multiplication) containing~1;
  \item the semigroup of $Y$ is $\mathcal C_b(E_Y)$-Feller;
  \item for all $t\geq 0$ and $x\in E_X$, the linear map  $P_t^x: \Span(\mathcal H_X) \to \mathbb R$, given by
    \begin{align}
      \label{eq:16}
      P_t^x H(\cdot,y) \coloneqq P_tH(\cdot,y)(x) = \mathbb E_{y}\Bigl[H(x,Y_t) \exp\Bigl(\int_0^t     \beta(Y_s)\, ds\Bigr)\Bigr]
    \end{align}
    is positive with $P_t^x 1 = 1$.
  \end{enumerate}
  Then, there is a unique continuous extension of $(P_t)_{t\geq 0}$ to ${\mathcal C}(E_X)$, which is again a positive linear form. Moreover, there is a family of probability kernels $(\mu_t)_{t\geq 0}$ from $E_X$ to $E_X$ such that \eqref{eq:toshow} holds.
\end{proposition}

The verification of the point (iv) is based on using the properties of $H(\cdot,y)$ and the form of the states of the dual process $Y$. In the examples from Section~\ref{s.examples}, we will e.g.\ apply moment problems for checking (iv). In general, there are more applications of the proposition than one might think as we will see in the section on examples.

\medskip

In some cases, verification of (iv) is possible by using approximate dual processes, as in the following corollary to Proposition~\ref{l.480}. It will be used in Example~\ref{ex:slvf}. Again the proof can be found in Section~\ref{s.proofresults}.

\begin{corollary}[Approximating duals for (iv) of Proposition~\ref{l.480}]\label{cor.480}
    Let $E_X, E_Y, H, \beta, Y$ and $P_t$ be as in Theorem~\ref{T1}(i) and assume that (i)--(iii) of Proposition~\ref{l.480} hold. 
    \\
    In addition, let $X^1, X^2,\dots$ be Markov processes (with state space $E_X$), and $Y, Y^1, Y^2,\dots$ be Markov processes (with state space $E_Y$) such that $X^n, Y^n$ are in $H$-duality with potential $\beta$ (see \eqref{eq:dualHbeta}), $n=1,2,\dots$, and $Y^n\xRightarrow{n\to\infty} Y$. Then, (iv) of Proposition~\ref{l.480} holds. In particular, \eqref{eq:toshow} of Theorem~\ref{T1} holds. 
\end{corollary}

\medskip

In many situations it is necessary to work with \emph{function-valued duals}, for instance in population genetics, if we deal with measure-valued processes (measures on some type space $I$ which is often compact) and if the mechanisms include mutation and selection. Then duality functions are functions on the space of the samples on $\mathcal U = I^\N$ for some compact type space $I$ and hence $\mathcal U$ is compact. We now give a second condition for verifying \eqref{eq:toshow}, which applies in the situation where $E_Y$ is a \emph{space of continuous functions} on some compact set.

\begin{proposition}[Another way to check condition \eqref{eq:toshow}]
  \label{l.480a}
  Let $E_X$, $E_Y$, $H$, $\beta$, $Y$ and $P_t$ be as in Theorem~\ref{T1}(i) with $E_Y$ being a set of continuous functions to be specified below. Assume that
  \begin{enumerate}[(i)]
  \item the semigroup of $Y$ is $\mathcal C_b(E_Y)$-Feller;
  \item there exists a compact metric space $E_U$ so that $E_Y\subseteq {\mathcal C}(E_U)$ is a vector space containing~1, which is dense (with respect to the $\sup$-norm) in ${\mathcal C}(E_U)$;
  \item for all $t\geq 0$ and $x\in E_X$, the linear map $Q_t^x: \text{span}(E_Y) \to \mathbb R$, given by
  \begin{align}
    \label{eq:8}
    Q_t^x y & \coloneqq P_t H(.,y)(x) = \mathbb E_{y}\Bigl[H(x,Y_t)
              \exp\Bigl(\int_0^t \beta(Y_s)\, ds\Bigr)\Bigr]
  \end{align}
  is a positive linear form with $Q_t^x 1 = 1$;
  \item for any $E_U$-valued random variable $U$ (with $E_U$ from (ii)), there is an $E_X$-valued random variable $X$ such that
    \begin{align}
      \label{eq:abc}
      \mathbb E[H(X,y)] = \mathbb E[y(U)]\, \text{ for all }\, y \in E_Y.
    \end{align}
  \end{enumerate}
  Then, there is a family of probability kernels $(\mu_t)_{t\geq 0}$
  from $E_X$ to $E_X$ such that \eqref{eq:toshow} holds.
\end{proposition}
In Example~\ref{ex:FVmut}, we will apply Proposition~\ref{l.480a} via the following corollary which contains easier to check conditions. Recall measure-valued processes on $\mathcal M(I)$ and that duality processes in this case are function-valued with functions depending on samples from $\mathcal U = I^\N$. 

\begin{corollary}[How to check \eqref{eq:abc}]
  Let \label{cor:htc_abc} $Y^y$ denote the stochastic process, distributed according to $Y$ with initial value $y$. In the situation of Proposition~\ref{l.480a}, we can replace (iv) by one of the following conditions.
  \begin{enumerate}
  \item[(iv')] Suppose $\beta=0$ and there is a subset $\mathcal F \subseteq \{f: E_U \to E_U \text{ measurable}\}$ with $E_Y \circ \mathcal F\coloneqq \{y\circ f: y\in E_Y, f\in \mathcal F\} \subseteq E_Y$ and   furthermore that $Y$ and $\mathcal F$ are such that for all $x\in E_X$ and $f\in\mathcal F$ (with $G_Y$ the generator of $Y$)
  $H(x,y \circ f) = H(x,y)$ and
  \begin{align}\label{eq:ycircf}
      G_YH(x,\cdot)(y) = G_YH(x,\cdot)(y\circ f).
  \end{align}
  Then, for any $E_U$-value random variable $U$ with $U \sim f(U)$ for all $f\in\mathcal F$, there is an $E_X$-valued random variable $X$ such that \eqref{eq:abc} holds. 
  \item[(iv'')] If $O\subseteq E_U$ is such that there are $y_1, y_2,\ldots \in E_Y$ such that $y_{n} \xrightarrow{n\to\infty} \indset{O^c}$  boundedly pointwise and $H(x,Y_t^{y_{n}})\xrightarrow{n\to\infty}0$ in probability for all $t\geq 0$. Then, for any $O$-valued random variable $U$, there is an $E_X$-valued random variable $X$ such that \eqref{eq:abc} holds. 
  \end{enumerate}
\end{corollary}

\begin{remark}[Using (iv') and (iv'')]
  We will use condition (iv') in Example~\ref{ex:FVmut}. We do not provide an example for using (iv''), but note that this result paves the way to deal with non-compact $E_X$, provided that there is a compactification $E_U$ of $O:=E_X$. In this case, we can use $E_U$ as a state space of $X$, but show that it never leaves $E_X$ using a sequence $y_1,y_2,\ldots\in E_Y$ as in Corollary~\ref{cor:htc_abc}.
\end{remark}

\subsection{Combination of mechanisms}
\label{ss.combine}
The above results develop considerable strength due to the possibility to extend the theory further to \emph{sums of operators}, each of which correspond to processes whose existence and uniqueness is already verified. Using Trotter's product formula \cite[Corollary 1.6.7]{EK86} we will show that if $E_X$ is compact and the generator can be written as a sum of operators corresponding to different mechanisms, then it suffices to check the assumptions of Theorem~\ref{T1} for each mechanism separately via Propositions~\ref{l.480a} and~\ref{l.480}, provided we have \emph{existence of the dual process corresponding to the sum}. More general state spaces will be discussed briefly in the Outlook-Section~\ref{ss.outcomp}.

\begin{theorem}[Trotters formula, combination of mechanisms]
  \label{T572}
  Let $E_X$, $E_Y$, $H$, $\mathcal H_X$, $\mathcal H_Y$ be as in Theorem~\ref{T1} and assume $\beta=0$. In addition, let $G_Y^{(1)},\dots, G_Y^{(m)}$ satisfy the conditions for $G_Y$ in Theorem~\ref{T1}(i), giving rise to  Markov processes $Y^{(1)},\dots, Y^{(m)}$ and semigroups $P^{(1)},\dots,P^{(m)}$. Assume that
  \begin{enumerate}
      \item[(a)] (i) and (ii) of Proposition~\ref{l.480} hold (in particular, $E_X$ is compact); 
      \item[(b)] (iii) and (iv) of Proposition~\ref{l.480} hold for each $Y^{(1)},\dots,Y^{(m)}$, and the corresponding semigroups $P^{(1)},\dots,P^{(m)}$; 
      \item[(c)] the $(G_Y^{(1)} + \cdots + G_Y^{(m)}, \mathcal H_Y)$-martingale problem is well-posed with solution $Y$, $\mathcal H_Y$ is a core, and the semigroup of $Y$ is $\mathcal C_b(E_Y)$-Feller.
  \end{enumerate}
  Then, there is a family of probability kernels $(\mu_t)_{t\geq 0}$ from $E_X$ to $E_X$, such that \eqref{eq:toshow0} and \eqref{eq:toshow} hold. In particular, for each $x\in E_X$ there exists a Markov process $X = (X_t)_{t\geq 0}$ starting in $x$ with transition kernels  $(\mu_t)_{t\geq 0}$ such that $X$ is Feller and the unique solution of the $(G_X^{(1)} + \cdots + G_X^{(m)}, \mathcal H_X)$-martingale problem.
\end{theorem}

If in Theorem~\ref{T572} for some reasons we already know that some of the mechanisms involved have actually unique solutions of the corresponding martingale problems then we can avoid checking the conditions for that mechanism. For an application of the strategy described in the following remark see Example~\ref{ex.1249}.

\begin{remark} [Well-posed mechanisms]
 \label{cor.814}
For Theorem~\ref{T572}(b), assume that $Y^{(1)},\dots, Y^{(m)}$ are Feller. If for some $i \in \{1,\dots,m\}$ the $G_X^{(i)}$-martingale problem is well-posed and in $H$-duality with $Y^{(i)}$, then the semigroup of $X^{(i)}$ satisfies (iv) of Proposition~\ref{l.480} by construction.
\end{remark}

\begin{remark}[Deterministic solutions of martingale problems with first order operators]
  \leavevmode \\
  The corollary is applied in Example \ref{ex:FVmut} in the case that $G_X^{(1)}$ has a simple structure. Recall \label{rem:dsomp} the conditions and notation of Theorem~\ref{T1} and assume that the  operator $G_X$ is a \emph{first order operator}, i.e.\  $\Span(\mathcal H_X)$ is closed under multiplication and 
  \begin{align}
  \label{eq:firstorder}
  G_X \Phi^2 - 2\Phi \, G_X \Phi=0 \text{ for all functions $\Phi\in
  \Span(\mathcal{H}_X)$.}    
  \end{align} 
  Then duality guarantees existence and  uniqueness of the corresponding martingale problem. Furthermore the solutions are \emph{deterministic}. In particular the duality relation of the processes reads as follows:
  \begin{align}
    \label{eq:34}
    \mathbb{E}_x [H(X_t,y)] = H(X_t,y) = \mathbb{E}_y \Bigl[H(x,Y_t)
    \exp\Bigl(\int_0^t \beta(Y_s)\,ds\Bigr)\Bigr].
  \end{align}
  Since $\Span(\mathcal{H}_X)$ is separating, the transition kernels
  $(\mu_t)_{t\ge0}$ in \eqref{eq:toshow} must satisfy
  \begin{align}
    \label{eq:35}
    \mu_t(x,\cdot) = \delta_{F_t(x)}(\cdot),
  \end{align}
  where $F_t(x)$ is the solution of the initial value problem corresponding to $G_X$, satisfying
  \begin{align}
    \label{eq:37}
    \bigl(G_X(H(\cdot,y) \bigr)(x) = \frac{d}{dt} H(F_t(x),y)\;
    \text{ and } \; F_0(x) =x.
  \end{align}
  Note that the measurability of $(t,x) \mapsto F_t(x)$ is guaranteed by \eqref{eq:34} because $Y$ is a solution of a martingale problem.
\end{remark}

\section{Proofs}
\label{s.proofresults}
\subsection{Proof of Theorem~\ref{T1}}
   \begin{proof}[Proof of Theorem~\ref{T1}(i)]
  For the semigroup property of $(P_t)_{t\geq 0}$, observe that by construction (i.e.\ linearity), and domimated convergence, using Fubini,
  \begin{align}
    \label{eq:prt21ii}
    \begin{split}
     P_t P_s H(.,y) & = \mathbb E_y\Big[ P_t H(., Y_s) \exp \Big( - \int_0^s \beta(Y_r) dr\Big) \Big]
     \\ & = \mathbb E_y\Big[ \mathbb E_{Y_s}\Big[ H(., Y_t) \exp \Big( - \int_0^t \beta(Y_r) dr\Big) \Big]\exp \Big( - \int_0^s \beta(Y_r) dr\Big) \Big] \Big]
     \\ & = \mathbb E_{y}\Big[ H(., Y_{t+s}) \exp \Big( - \int_0^{t+s} \beta(Y_r) dr\Big) \Big]
     \\ & = P_{t+s} H(.,y).
     \end{split}
  \end{align}
   For its generator $G_X$ and each $y\in E_Y$ we have
  \begin{align}
    \label{eq:131b}
    \begin{split}
      G_XH(\cdot,y)(x)
      & = \lim_{h\to 0} \frac{1}{h}\mathbb E_y\Big[H(x,Y_h)
      \exp\Big(\int_0^h\beta(Y_s)\,ds\Big) - H(x,y)\Big] \\
      & = \lim_{h\to 0} \frac{1}{h}
      \mathbb E_y\Big[H(x,Y_h)\Big(\exp\Big(\int_0^h\beta(Y_s)\,ds\Big)
      - 1\Big) + H(x,Y_h) - H(x,y)\Big] \\
      & = \beta(y) H(x,y) + G_YH(x,\cdot)(y),
    \end{split}
  \end{align}
  where we have used the strong continuity of the semigroup of $Y$, i.e.\ $Y_h \xRightarrow{h\to 0}y$. This shows that \eqref{eq:dual3new} holds.
\end{proof}

 \begin{proof}[Proof of Theorem~\ref{T1}(ii)]
  (ii) By Theorem~4.1.1 in \cite{EK86}, there exists a Markov process $X$ with transition functions $(\mu_t)_{t\geq 0}$, provided that $(\mu_t)_{t \geq 0}$ is a family of probability distributions satisfying \eqref{eq:toshow0}, $\mu_0(x,\cdot)=\delta_x(\cdot)$ and
  \begin{align}
    \label{eq:81}
    \mu_{t+s}(x,\cdot) = \int \mu_t(x,   dx')\mu_s(x',\cdot), \quad s,t\geq 0, \; x\in E_X.
  \end{align}
  First, by \eqref{eq:toshow}, there exists a transition kernel
  $\mu_0$ such that for all $y\in E_Y$
  \begin{align}
    \label{eq:6}
    H(x,y) = \int\limits_{E_X} \mu_0(x,dx') H(x',y).
  \end{align}
  Since $\Span(\mathcal{H}_X)$ is separating on $\mathcal M_1(E_X)$,
  this implies $\mu_0(x,dx') = \delta_x(dx')$.

  In order to show~\eqref{eq:81}, observe that, by \eqref{eq:toshow}, the semigroup-property of (i), and Fubini,
  \begin{align}
    \label{eq:1278b}
    \begin{aligned}
      \int \mu_{t+s}(x, dx'') H(x'',y) & = P_{t+s}H(., y)(x) = P_s (P_t H(.,y))(x) \\ & = 
      P_s \mathbb E_y \Big[ H(.,Y_t) \exp \Big( - \int_0^t \beta(Y_r)dr \Big) \Big](x) \\ & = 
      \mathbb E_y \Big[ (P_s H(.,Y_t)(x)) \exp \Big( - \int_0^t \beta(Y_r)dr \Big) \Big] \\ & = \int \mu_s(x, dx') \mathbb E_y \Big[ H(x', Y_t)\cdot \exp \Big( - \int_0^t \beta(Y_r)dr \Big) \Big] 
      \\ & = \int \mu_s(x,dx') \int \mu_t(x', dx'') H(x'', y).
    \end{aligned}
  \end{align}
  Since $\Span(\mathcal{H}_X)$ is separating, we have shown \eqref{eq:81} and we have constructed a Markov process $X$ with $X_0=x$ and
  \begin{align}
    \label{eq:0129}
    \mathbb E_x[H(X_t,y)] = \int\limits_{E_X} \mu_t(x,dx')H(x',y) = \mathbb
    E_y\Big[H(x,Y_t) \exp\Big(\int_0^t\beta(Y_s)\,ds\Big)\Big].
  \end{align}

  We now show that $X$ is the unique solution of the martingale problem for $(G_X,\mathcal H_X,x)$. Uniqueness follows directly from Proposition~4.4.7 in \cite{EK86}). If we can show that $X$ has a progressively measurable modification, then by Proposition~4.1.7 in \cite{EK86} $X$ is a solution of the martingale problem for $({\mathcal H}_X,G_X,x)$, i.e.\ existence follows. For existence of a progressively measurable modification, it remains to show $\mu_h(x,\cdot) \xRightarrow{h\to 0} \delta_x(\cdot)$ for all $x \in E_X$; see Theorem~II.2.6 in \cite{Doob1953}, together with \cite[Proposition 1.12]{KaratzasShreve1991}. This follows via duality from the Assumption~\eqref{eq:toshow} on the dual process.
  \end{proof}
 
  \begin{proof}[Proof of Theorem~\ref{T1}(iii)]
 For the $\mathcal C_b(E_X)$-Feller property, using strong continuity of the semigroup of $Y$ and  \eqref{eq:0129}, we obtain $\mathbb E_x[H(X_t,y)] \xrightarrow{t\to 0} H(x,y)$ for all $y \in E_Y$. If $\Span(\mathcal{H}_X)$ is convergence determining, this implies $X_t \xRightarrow{t\to 0}x$ and therefore $\mathbb E_x[f(X_t)] \xrightarrow{t\to 0} f(x)$ for all $f\in\mathcal C_b(E_X)$. This shows that the semigroup of $X$ is strongly continuous. In order to show continuity of $x\mapsto \mathbb E_x[f(X_t)]$ for $f\in\mathcal C_b(E_X)$ and $t\geq 0$, let $x, x_1, x_2,\ldots\in E_x$ such that $x_n \xrightarrow{n\to\infty} x$ and write $X_t^x$ for a random variable distributed according to $(X_t)_\ast \mathbb P_x$. Using dominated convergence and continuity of $H$ we obtain 
  \begin{align}
    \label{eq:736}
    \begin{aligned}
      \mathbb E[H(X^{x_n}_t,y)]
      & = \mathbb E_y\Big[H(x_n, Y_t) \exp\Big(\int_0^t
      \beta(Y_s)\,ds\Big)\Big] \\
      & \xrightarrow{n\to\infty} \mathbb E_y\Big[H(x, Y_t)
      \exp\Big(\int_0^t \beta(Y_s)\,ds\Big)\Big] = \mathbb E[H(X_t^x,y)],
    \end{aligned}
  \end{align}
  for all $y$. This shows that $X_t^{x_n} \xRightarrow{n\to\infty}  X_t^x$, since $\Span(\mathcal{H}_X)$ is convergence determining.  Therefore, for $f\in\mathcal C_b(E_X)$, $x\mapsto \mathbb  E[f(X_t^x)]$ is continuous and bounded, i.e.\ $X$ is $\mathcal  C_b(E_X)$-Feller.
\end{proof}

\subsection{Proof of Propositions~\ref{l3}, \ref{l.480}, \ref{l.480a} and Corollary~\ref{cor:htc_abc}}
\begin{proof}[Proof of Proposition~\ref{l3}]
  We first show (i) and then we prove that (ii) is a consequence of (i). Since
  $\Span(\mathcal{H}_X)$ is convergence determining, we only have to
  show that for all $y\in E_Y$ the mapping
  \begin{align}
    \label{eq:31}
    (t,x) \mapsto \int \mu_t(x, dx') H(x',y) = \mathbb E_y\Big[H(x,
    Y_t) \exp\Big(\int_0^t \beta(Y_s)ds\Big)\Big]
  \end{align}
  is continuous. Continuity in $x$ follows from boundedness of $H$ and dominated convergence. Continuity in $t$ follows from the Markov property and the strong continuity of the semigroup of $Y$.

  \noindent
  To see that (i) implies (ii) let
  $f_1, f_2,\dots \in \mathcal C_b(E_X)$ be such that
  $f_n \xrightarrow{n\to\infty} \indset\Gamma$ boundedly pointwise,
  then $(t,x)\mapsto \mu_t(x,\Gamma) = \lim_{n\to\infty}\int \mu_t(x, dx') f_n(x')$ is measurable as a limit of continuous functions.
\end{proof}

\begin{proof}[Proof of Proposition~\ref{l.480}]
  First, $\Span(\mathcal H_X)$ is dense in $\mathcal C(E_X)$ due to (i), (ii) and the Stone-Weierstrass theorem. Second, $P_t^x$ is continuous (since the semigroup of $Y$ is $\mathcal C_b(E_Y)$-Feller by (iii)) on $\Span(\mathcal H_X)$, so there is a unique extension of $P_t^x$ to $\mathcal C(E_X)$. By continuity, this extension also satisfies (iv), i.e.\ it is a  positive linear form.
  By the Riesz-Markov theorem, there is a unique measure $\mu_t(x,\cdot)$ such that
  \begin{align}
    \label{eq:39}
    P_t^x f = \int_{E_X} \mu_t(x,dx') f(x'), \quad f\in {\mathcal C}(E_X).
  \end{align}
  Since $P^x_t1 = 1$ by (iv), we know that $\mu_t(x,\cdot) \in \mathcal M_1(E_X)$. Applying \eqref{eq:39} to $\mathcal H_X$ and using \eqref{eq:16}
  we obtain
  \begin{align}
    \mathbb E_{y}\Bigl[H(x,Y_t)
    \exp\Bigl(\int_0^t \beta(Y_s)\, ds\Bigr)\Bigr] = \int_{E_X}
    \mu_t(x,dx') H(x',y),
  \end{align}
  which is precisely~\eqref{eq:toshow}. Since $x\mapsto \mu_t(x,\cdot)$ is
  continuous by the last display, we conclude that $\mu_t$ is a
  probability kernel from $E_X$ to $E_X$.
\end{proof}

\begin{proof}[Proof of Corollary~\ref{cor.480}]
    Let $P^x$ be as in Theorem~\ref{T1}(i). By the approximate duality, we can write for \eqref{eq:16} using the convergence $Y_n\xRightarrow{n\to\infty} Y$
    \begin{equation}
        \begin{aligned}
        P_t^xH(.,y) & = \lim_{n\to\infty} 
        \mathbb E_y\Big[ H(x,Y^n_t) \exp\Big( \int_0^t \beta(Y^n_s)ds\Big) \Big] = \lim_{n\to\infty} 
        \mathbb E_x[ H(X_t^n, y)].
        \end{aligned}
    \end{equation}
    This calculation shows that $P_t^x$ from Proposition~\ref{l.480} is positive (as a limit of positive maps) and $P_t^x1 = 1$ (as a limit of maps with the same property). Hence, Proposition~\ref{l.480}(iv) holds. 
\end{proof}

\begin{proof}[Proof of Proposition~\ref{l.480a}]
  Let $t\geq 0$ and $x\in E_X$. First, since $E_Y$ is dense in $\mathcal C(E_U)$ and the semigroup of $Y$ is $\mathcal C_b(E_Y)$-Feller, we can extend $Q_t^x$ to $\mathcal C(E_U)$. This then gives a positive linear form on $\mathcal C(E_U)$. By the Riesz-Markov theorem (recall that $E_U$ is  compact), we find a probability measure  $\nu_t(x,\cdot) \in \mathcal M_1(E_U)$ such that  $Q_t^x f = \int \nu_t(x,du) f(u)$ for all $f\in\mathcal C(E_U)$. For $y\in E_Y$, this amounts to
  \begin{align}\label{eq:310}
    \int \nu_t(x, du) y(u) = \mathbb E_{y}\Bigl[H(x,Y_t)
    \exp\Bigl(\int_0^t \beta(Y_s)\, ds\Bigr)\Bigr].
  \end{align}
  Now, let $U$ have the distribution $\nu_t(x,\cdot)$. By assumption,
  we find $X$ so that $\mathbb E[H(X,y)] = \mathbb E[y(U)]$ for all $y\in E_Y$. Denoting the distribution of $X$ by $\mu_t(x,\cdot)$, we
  obtain
  \begin{align*}
    \int \mu_t(x, dx') H(x', y) = \int \nu_t(x, du) y(u)
  \end{align*}
  and \eqref{eq:toshow} follows.
\end{proof}

\begin{proof}[Proof of Corollary \ref{cor:htc_abc}]
From the proof of Proposition~\ref{l.480a}, we see that we only need to find $X$ such that \eqref{eq:abc} holds for $U \sim \nu_t(x,\cdot)$, where $\nu_t$ is from \eqref{eq:310}. 

Under the assumptions of (iv'), we claim that for all $f\in\mathcal F$ and $y\in E_Y$ \begin{align}
    \label{eq:cni}
      \mathbb E\bigl[H(x, Y_t^{y\circ f})\bigr]
        = \mathbb E\bigl[H(x, Y_t^{y})\bigr].
\end{align}
Indeed, by assumption, using the semigroup $(S_t)_{t\geq 0}$ of $Y$,
\begin{align} \notag
  \mathbb E\bigl[H (x, Y_t^{y\circ f}) \bigr] & = H(x, y\circ f) + \int_0^t S_s G_Y H(x, y\circ f) \\ & = H(x, y) + \int_0^t S_s G_Y H(x,y)ds = \mathbb E\bigl[H (x, Y_t^{y}) \bigr]. \label{eq:019}
\end{align}
Then, we have for such all $U$ with $U\sim \nu_t(x,\cdot)$ and for all $y\in E_Y$, 
  \begin{equation}
    \label{eq:mnc}
    \begin{aligned}
      \mathbb E[y(f(U))] & = \int \nu_t(x,du) (y\circ f)(u) = \mathbb
      E\bigl[H(x, Y_t^{y\circ f}) \Bigr] = \mathbb E\bigl[H(x, Y_t^{y})
      \Bigr] = \mathbb
      E[y(U)].
    \end{aligned}
  \end{equation}
  Hence, $U \sim f(U)$ for all $f\in\mathcal F$, so we require to show \eqref{eq:abc} only for such $U$.

  \medskip
  \noindent
  For (iv''), we write for $U\sim \nu_t(x,\cdot)$
  \begin{equation}
    \label{eq:cwo}
    \begin{aligned}
      \mathbb P(U\in O^c) & = \lim_{n\to\infty} \mathbb E[y_n(U)] =      \lim_{n\to\infty}\int \nu_t(x,du) y_n(u) \\ & =      \lim_{n\to\infty}\mathbb E\bigl[H(x, Y_t^{y_n})      \exp\Bigl(\int_0^t \beta(Y_s^{y_n})\, ds\Bigr)\Bigr] = 0.
    \end{aligned}
  \end{equation}
  Hence, $U$ has values in $O$, almost surely, and we need to show~\eqref{eq:abc} only for such $O$.
\end{proof}

\subsection{Proof of Theorem~\ref{T572}}
  To facilitate reading, we restrict ourselves to the case $m=2$. We will use Trotter's product formula for the semigroups $(Q^{(1)}_t)_{t\geq 0}$, $(Q^{(2)}_t)_{t\geq 0}$ and  $(Q_t)_{t\geq 0}$, given by
  \begin{align*}
    Q^{(i)}_t f(y)
    & = \mathbb E_y[f(Y^{(i)}_t)], \quad i=1,2,\qquad Q_t f(y)
    = \mathbb E_y[f(Y_t)].
  \end{align*}
  These are strongly continuous contraction semigroups on $\mathcal C_b(E_Y)$ with generators $G_Y^{(1)}, G_Y^{(2)}$ and $G_Y^{(1)} + G_Y^{(2)}$, respectively. In addition, $\mathcal H_X$ is a core for $G_Y^{(1)} + G_Y^{(2)}$ by assumption. From Trotter's formula, we see that 
  \begin{align}
    \label{e607}
    Q_tf=\lim_{n \to \infty} (\prescript{(n) \mkern-2mu}{}Q_tf),
    \text{  with  }\prescript{(n) \mkern-2mu}{}
    Q_t=\left(Q_{t/2n}^{(2)} Q_{t/2n}^{(1)} \ldots Q_{t/2n}^{(2)}
      Q_{t/2n}^{(1)}\right) \text{ and } 2n-\text{factors.}
  \end{align}
  From (a) and (b) and Propositions~\ref{l3} and \ref{l.480} we know that  \eqref{eq:toshow0} and \eqref{eq:toshow} hold for $i=1,2$. So, as Theorem~\ref{T1} shows, there are $E_X$-valued processes $X^{(1)}$ and $X^{(2)}$ with semigroups $P^{(1)}$ and $P^{(2)}$, respectively, given by
  \begin{align*}
    P_t^{(i)}H(\cdot,y)(x) = \mathbb E_x[H(X_t,y)] =
    Q_t^{(i)}H(x,\cdot)(y), \quad i=1,2.
  \end{align*}
  As a next step, we define
  \begin{align*}
    P_tH(\cdot,y)(x)
    & \coloneqq Q_tH(x,\cdot)(y) = \lim_{n \to \infty}
      \left(Q_{t/2n}^{(2)} Q_{t/2n}^{(1)} \ldots Q_{t/2n}^{(2)}
      Q_{t/2n}^{(1)}\right)H(x,\cdot)(y)
    \\ & = \lim_{n\to\infty} \left(P_{t/2n}^{(2)} P_{t/2n}^{(1)}
         \ldots P_{t/2n}^{(2)} P_{t/2n}^{(1)}\right) H(\cdot,y)(x).
  \end{align*}
  By Proposition~\ref{l.480}, $P^{(1)}_t$ and $P^{(2)}_t$ can be extended to $C(E_X)$ and by (b) are positive with $P^{(1)}_t 1 = P^{(2)}_t 1 = 1$. So,  we see that $P_t$ can be continuously extended on $\mathcal C(E_X)$ with $P1=1$ and by the Riesz-Markov Theorem, for every $x\in E_X$ and $t\geq 0$, there is a Markov kernel $\mu_t(x,.)$ from $E_X$ to $E_X$ such that \eqref{eq:toshow} holds. In addition, \eqref{eq:toshow0} holds since $(t,x)\mapsto\mu_t(x,.)$ is measurable as a limit of continuous functions; see Proposition~\ref{l3}. Hence, all conclusions of Theorem~\ref{T1}(ii) follow. \hfill \qed

\section{Examples}
\label{s.examples}
In this section we give several examples how the above results can be applied. We will  distinguish between the compact and locally compact case. Example~\ref{ex:FV} is the Fleming-Viot measure-valued diffusion (without mutation and selection), which is a process taking values in $\mathcal M_1([0,1])$. As an extension, we consider the Cannings model in Example~\ref{ex.can-mod} with the same state space, but \cadlag paths with jumps. In Example~\ref{ex.1249}, we add a spatial component, which gives an application of Theorem~\ref{T572}. In Example~\ref{ex:FVmut} (Fleming-Viot process with mutation) and Example~\ref{ex:slvf} (spatial $\Lambda$-Fleming-Viot process), we use function-valued duals. Turning to the case of locally compact state spaces, we treat in Example~\ref{ex:CSBP} the continuous state branching process and in Example~\ref{ex:Feller} as a special case the Feller branching process using a different duality, referred to as Feynman-Kac duality, and $\beta\neq 0$.

\subsection{Compact state spaces -- resampling systems}
\label{ss.compstat}
Population models with a constant population size do not only arise frequently in population genetics, but are also frequently analysed using dual processes. Their large-population-limits come as solutions of stochastic differential equations, such as the Wright-Fisher diffusion, measure-valued diffusions, or more complex approaches, such as historical or tree-valued processes; see e.g.\ \cite{DP91, D93, Etheridge2000, GPWmp13, DGP12}. In this section, we remain in the realm of compact state spaces and apply our abstract results to the measure-valued Fleming-Viot process without (Example~\ref{ex:FV}) and with mutation (Example~\ref{ex:FVmut}). The moment-duality for the former example is extended in Example~\ref{ex.can-mod} to more general Cannings models, and in Example~\ref{ex.1249} to a geographically structured model, studied with results from Section~\ref{ss.combine}. Then, we will have a function-valued dual in the Fleming-Viot process with mutation from Example~\ref{ex:FVmut} and the spatial Lambda-Fleming-Viot process from Example~\ref{ex:slvf}. 

For Examples~\ref{ex:FV}--\ref{ex:FVmut}, we have $E_X = \mathcal M_1(I)$ with $I=[0,1]$, and recall that $F : \mathcal M_1([0,1]) \to \R$ is called differentiable if for all $x \in \mathcal M([0,1])$ (the set of finite Borel-measures on $[0,1]$) and $u \in [0,1]$
\begin{align}
    \label{eq:24}
    \frac{\partial F (x)}{\partial x} [u] = \lim_{\varepsilon
    \downarrow 0} \frac{F(x+\varepsilon \delta_u) - F(x)}{\varepsilon}
    \; \text{ exists and } \;
    (u,x) \mapsto \frac{\partial F (x)}{\partial x} [u] \; \text{ is
    continuous}.
\end{align}
In the obvious way, if it exists, the second derivative is defined as
\begin{align}
    \label{eq:25}
    \frac{\partial^2 F(x)}{\partial x \partial x}[u,v] \coloneqq
    \frac{\partial}{\partial x} \biggl(\frac{\partial F
    (x)}{\partial x} [u] \biggr) [v]
\end{align}
and we say that $F$ is twice continuously differentiable if $(u,v,x) \mapsto \frac{\partial^2 F(x)}{\partial x \partial x}[u,v]$ is continuous.  We set
\begin{align}
    \label{eq:17}
    \mathcal D_X = \{ F : \mathcal M_1([0,1]) \to \R: F \text{ is twice
    continuously differentiable} \}.
\end{align}

\subsubsection{Resampling systems with compact state spaces and particle-valued duals}
\begin{example}[Fleming-Viot process]
  \label{ex:FV}
  For $F\in \mathcal D_X$, we define
  \begin{align}
    \label{eq:16a}
    G_X F(x) & \coloneqq  \int_{[0,1]} \int_{[0,1]} \frac{\partial^2
               F(x)}{\partial x \partial x}[u,v] (x(du)
               \delta_u(dv) - x(du)x(dv)).
  \end{align}
  For $u = (u_1,\dots,u_n) \in [0,1]^n$, $k = (k_1,\dots,k_n) \in
  \N_0^n$ we write $u^{k} \coloneqq u_1^{k_1} \cdots u_n^{k_n}$. Then for $F(x) \coloneqq \langle x^{\otimes n}, u^{k} \rangle \coloneqq \int x^{\otimes n}(du) \, u^{k}$, a straight-forward calculation shows that
  \begin{align}\label{eq:16b}
    G_X F(x) & \coloneqq  \sum_{1\leq i < j \leq n} \langle
               x^{\otimes n}, u^{\theta_{ij} k} - u^k\rangle,
  \end{align}
  where $\theta_{ij} k \in \N_0^{n-1}$ arises from $k=(k_1,\dots,k_n)$ by replacing $k_{i\wedge j}$ by $k_i + k_j$, and shifting the indices above $i\vee j$ down by one; see \cite[p.~31]{D93}.

  The \emph{dual process} $Y$ is the partition-valued Kingman coalescent, i.e.\ a pure jump process in which every pair of partition elements coalesces at rate $1$. More formally, we take
  \begin{align}
    \label{eq:132}
    E_Y \coloneqq \bigcup_{n=0}^\infty \mathcal P_n,
  \end{align}
  where $\mathcal P_n$ is the set of partitions of $\{1,\dots,n\}$. For a partition $y$ we write $|y|$ for the number of its partition elements of $y$. A partition $y \in \mathcal P_n$ induces an equivalence relation ``$\sim$'' on $\{1,\dots,n\}$ with $i \sim j$ if $i$ and $j$ are in the same partition element of $y$. We order  the partition elements of $y \in \mathcal P_n$ according to their smallest elements, so that we have a well-defined representation $y = \{y_1,\dots,y_{|y|}\}$. We also write $|y_k|$ for the number of elements of the $k$th partition element and we write $y(i) = k$ if $i\in y_k$, i.e.\ $y(i)$ is the number of the partition element $i$ is in. To define the duality function for $y\in \mathcal P_n$ and $u = (u_1,\dots,u_{|y|}) \in [0,1]^{|y|}$, we set
  \begin{align}
    \label{eq:133}
    u^y \coloneqq \prod_{k=1}^{|y|} u_k^{|y_k|} = \prod_{i=1}^n
    u_{y(i)}, \qquad H(x,y) = \int x^{\otimes |y|}(du) \, u^y.
  \end{align}
  For the dynamics of the dual, for $y \in \mathcal P_n$ with $|y|=m\leq n$ and $y' \in \mathcal P_m$, we write $y'\diamond y \in \mathcal P_n$ for the partition with $i \sim j$ if $y'(y(i)) = y'(y(j))$, $i,j=1,\dots,n$. In other words $y'\diamond y$ arises from $y$ by merging partition elements of $y$ according to partition $y'$. For example, if $y=\{\{1\},\{2,3\},\{4\}\}$ and $y'=\{\{1,3\},\{2\}\}$, then $y'\diamond y = \{\{1,4\},\{2,3\}\}$.  For $1\le i<j \le m$ we define the partition $\theta_{\{ij\}} \in \mathcal P_{m}$ by (note that $i\in \{1,j-1\}$ and $j=m$ is possible below)
  \begin{align}
    \label{eq:15}
    \theta_{\{ij\}} \coloneqq \theta_{\{ij\}}^{\,(m)} \coloneqq
    \{\{1\},\dots,\{i-1\}, \{i,j\},\{i+1\},\dots,
    \{j-1\},\{j+1\} \dots,\{m\}\} \in \mathcal P_{m-1}.
  \end{align}
  We will typically omit the dependence of $\theta_{\{ij\}}$ on $m$ but it should be clear from the context what $m$ is. For instance, the  operation $\theta_{\{ij\}} \diamond y$ means that $m=|y|$ and that $i$th and $j$th partition elements of $y$ are merged.

  With this notation the process $Y$ can be defined as a process with transitions
  \begin{align}
    \label{eq:135}
    \text{from } Y = y \text{ to } \theta_{\{ij\}} \diamond y \text{ at
    rate $1$ for all $1\leq i<j\leq |y|$}.
  \end{align}
  Thus, $Y$ solves the martingale problem with the operator $G_Y$ acting on the duality function as follows
  \begin{align}
    \label{eq:136}
    G_YH(x,\cdot)(y) = \sum_{1\leq i<j\leq |y|} \langle x^{\otimes(|y|-1)},
    u^{\theta_{\{ij\}} \diamond y}\rangle - \langle x^{\otimes |y|}, u^y\rangle
    = \sum_{1\leq i<j\leq |y|} \langle x^{\otimes |y|},
    u^{\theta_{\{ij\}} \diamond y} - u^y\rangle,
  \end{align}
  hence $G_X H(\cdot,y)(x) = G_Y H(x,\cdot)(y)$ by \eqref{eq:16b}.  We note that $Y$ is a Feller process and writing $\mathbb P_m$ for the  distribution with initial condition $\{\{1\},\dots,\{m\}\}$,
  \begin{align}
    \label{eq:7}
    \mathbb E_y[f(Y_t)] = \mathbb E_{|y|}[f(Y_t \diamond y)]
  \end{align}
  by the definition of $Y$ (since the dynamics is on and not within the partition elements). For $y \in \mathcal P_n$ we have
  \begin{align*}
    P_tH(\cdot,y)(x)
    & \coloneqq \mathbb E_y[H(x, Y_t)]
      = \sum_{y'} \mathbb P_y[Y_t = y'] \int_{[0,1]^{|y'|}} x^{\otimes
      |y'|}(du_1,\dots,du_{|y'|})\, u_1^{|y_1|} \cdots u_{|y'|}^{|y'_{|y'|}|},
  \end{align*}
  where the sum is over all $y' \in \mathcal P_m$, $m \in
  \{1,\dots,n-1\}$ which are possible outcomes (otherwise the probability is
  $0$) of the process $Y$ with initial condition $y$.

  To check the condition \eqref{eq:toshow} of Theorem~\ref{T1} we need to find a $\mathcal M_1([0,1])$-valued random variable $X_t$ such
  that for all $y\in E_Y$,
  \begin{align}
    \label{eq:toshowWF}
    \begin{split}
      \mathbb E_x[\langle X_t^{\otimes |y|}, u^y\rangle] & = \mathbb
      E_x\Big[\int_{[0,1]^{|y|}} X_t^{\otimes |y|}(du_1,\dots,du_{|y|})
      u_1^{|y_1|} \cdots u_{|y|}^{|y_{|y|}|} \Big] \\
      & = \mathbb E_y[\langle x^{\otimes |Y_t|}, u^{Y_t}\rangle] =
      \mathbb E_{|y|}[\langle x^{\otimes |Y_t|}, u^{Y_t \diamond y}
      \rangle] \eqqcolon \mathfrak{m}_y.
    \end{split}
  \end{align}
  In order to find $X_t$, we first fix $m \in \N$. We need to find $[0,1]$-valued random variables $U_1,\dots,U_m$ such that for $U=(U_1,\dots,U_m)$ all $y\in E_Y$ with $|y|=m$
  \begin{align}
    \label{eq:U}
    \mathbb E[U^y] = \mathfrak{m}_y.
  \end{align}
  By the \emph{multi-dimensional Hausdorff moment problem} \cite[Proposition~6.11, p.~134]{Berg1984}, this is guaranteed given
  that, for all $k, \ell \in \mathbb N_0^m$ (and
  sum over all $p = (p_1,\dots, p_m) \in \mathbb N_0^m$ with $p \le \ell$ componentwise below) we have
  \begin{align}
    \label{eq:138}
    \sum_{p_1=0}^{\ell_1} \cdots \sum_{p_m=0}^{\ell_m} (-1)^{p_1 +
    \cdots +p_m} \binom{\ell_1}{p_1}
    \cdots \binom{\ell_m}{p_m} \, \mathfrak{m}_{y^{(k+p)}} \geq 0.
  \end{align}
  Here, for $k =(k_1,\dots,k_m) \in \N_0^m$, $y=y^{(k)}$ is a partition with $|y|=m$ and $|y_1| = k_1,\dots, |y_m|=k_m$. Note that for $p, k \in \N_0^m$ and $y \in \mathcal P_m$ we have $u^{y\diamond y^{(k+p)}} = u^{y\diamond y^{(k)}} u^{y\diamond y^{(p)}}$.

  In order to show \eqref{eq:138},  we write for the left hand side
  \begin{align}
    \label{eq:139}
    \begin{aligned}
      \sum_{p_1=0}^{\ell_1} \cdots \sum_{p_m=0}^{\ell_m}
      & (-1)^{p_1 + \cdots +p_m} \binom{\ell_1}{p_1} \cdots \binom{\ell_m}{p_m}
        \mathbb E_{m}[\langle x^{\otimes |Y_t|}, u^{Y_t \diamond
        y^{(k+p)}}\rangle] \\
      & = \mathbb E_m\Big[\langle x^{\otimes
        |Y_t|}, u^{Y_t \diamond y^{(k)}} \sum_{p_1=0}^{\ell_1} \cdots
        \sum_{p_m=0}^{\ell_m} (-1)^{y^{(p)}} \binom{\ell_1}{p_1} \cdots
        \binom{\ell_m}{p_m} u^{Y_t \diamond y^{(p)}}\rangle\Big] \\
      & = \mathbb E_m[\langle x^{\otimes |Y_t|}, u^{Y_t\diamond
        y^{(k)}}(1-u)^{Y_t\diamond y^{(\ell)}}\rangle] \geq 0.
    \end{aligned}
  \end{align}
  Hence we have shown the existence of $U_1,\dots,U_m$ with \eqref{eq:U}. By a projective limit argument we can extend this construction to show existence of $U_1, U_2,\dots$ such that \eqref{eq:U} holds for any finite subset. The resulting sequence $U_1,U_2,\dots$ is exchangeable. Hence, by de Finetti's theorem there is a $\mathcal M_1([0,1])$-valued random variable $X_t$ such  that $U_1,U_2,\dots$ is independent given $X_t$. In particular we have
  \begin{align}
    \label{eq:140}
    \mathbb E[U^y] = \mathbb E[\langle X_t^{\otimes |y|}, u^y\rangle].
  \end{align}
  But this is exactly \eqref{eq:toshow}. Moreover, \eqref{eq:toshow0} holds by Proposition~\ref{l3}. Hence, well-posedness of the $(G_X, \mathcal H_X)$-martingale problem follows. Since $Y$ is Feller, and $\mathcal H_X$ is convergence determining (since $[0,1]$ is  compact), $X$ is Feller as well by Theorem~\ref{T1}(ii). By  Proposition~\ref{p.loccomp-cadl}, there is a modification with \cadlag paths. Moreover, since $G_X$ is second order, the solution has a version with almost surely continuous paths; see  Proposition~4.5 in \cite{DGP12}. This process is usually referred to as the \emph{Fleming-Viot measure-valued process}.
\end{example}

\begin{example}[Cannings model]
  \label{ex.can-mod}
  For the Cannings model (without Fleming-Viot resampling), let $\Lambda$ be a finite measure on $(0,1]$ (implying no mass at~0) and  $\Lambda^*(dr) = \frac{\Lambda(dr)}{r^2}$, $r \in (0,1]$. Here,
  \begin{align}
    \label{eq:28}
    G_X F(x) = \int_{(0,1]} \Lambda^*(dr) \int_{E_X} x(du)
    \bigl(F((1-r)x + r\delta_u) -F(x)\bigr).
  \end{align}
  We note that for $F(x) = \langle x^{\otimes n}, u^k\rangle$, we have
  \begin{align}\label{eq:29}
    G_X F(x) = \sum_{j=2}^n \binom nj \lambda_{nj}
    \bigl(\langle x^{\otimes(n-j+1)},
    u^{\theta_{j} k}\rangle - \langle x^{\otimes n},
    u^k\rangle\bigr)
  \end{align}
  with $\theta_{j} k = (k_1,\dots,k_{n-j}, k_{n-j+1} + \cdots + k_n)$ (with a slight abuse of notation for $\theta$ from Example~\ref{ex:FV})
  and
  \begin{align}
    \label{eq:30}
    \lambda_{nj} = \int_0^1 r^j (1-r)^{n-j} \, \Lambda^*(dr), \quad j
    \in \{2,\dots,n\}.
  \end{align}
  For the dual process $Y$, we use the notation of   Example~\ref{ex:FV}. Again, the state space of $Y$ is $E_Y$ from \eqref{eq:132}, and the duality function $H$ is as in \eqref{eq:133}. Here, $Y$ is the \emph{partition valued $\Lambda$-coalescent}, i.e.\ a pure jump process with the following dynamics: If the current state of the process consists of $n \ge 2$ blocks then each $j$-tuple merges into a single block at  rate $\lambda_{nj}$.  Let $y \in E_Y$ with $|y|=n$ and let $J \subset \{1,\dots,n\}$.  Generalizing the notation from \eqref{eq:15} and \eqref{eq:29}, we write $\theta_{J}$ for the partition of $\{1,\dots,n\}$ in which all elements of $J$ are put in a one block and the other partition elements are singletons. Note that  $\theta_{ij}$ from \eqref{eq:15} equals $\theta_{J}$ with $J=\{i,j\}$ and $\theta_k$ from \eqref{eq:29} equals $\theta_{|J|}$. With this notation the process $Y$ can be defined as a process with transitions
  \begin{align}
    \label{eq:32}
    \text{from } Y = y \text{ to } \theta_{J} \diamond y \text{ at
    rate $\lambda_{|y|,|J|}$ for $J \subset \{1,\dots,|y|\}$}.
  \end{align}
  In particular, $Y$ solves the martingale problem for
  \begin{align}
    \label{eq:33}
    \begin{split}
      G_Y H(x,\cdot)(y)
      & = \sum_{J \subset \{1,\dots,|y|\}}
      \lambda_{|y|,|J|}\bigl(\langle x^{\otimes(|y|-|J|+1)},
      u^{\theta_{J} \diamond y}\rangle - \langle x^{\otimes |y|},
      u^y\rangle\bigr)
       = \sum_{J \subset \{1,\dots,|y|\}} \lambda_{|y|,|J|} \langle
      x^{\otimes |y|}, u^{\theta_{J} \diamond y} - u^y\rangle.
    \end{split}
  \end{align}
  Now, we can argue as in Example~\ref{ex:FV} that \eqref{eq:7} also holds and that the proof that the multi-dimensional Hausdorff moment problem has a solution (see \eqref{eq:139}) literally carries through. As a result it follows that $(G_X,\mathcal H_X)$-martingale problem is well-posed. Again, $X$ is Feller since $Y$ is Feller and $\mathcal H_X$ is convergence determining, and $X$ has \cadlag paths by Proposition \ref{p.loccomp-cadl}. Here, $G_X$ is not second order. In particular, the paths are not continuous but are jump  processes which even have countably many jumps, if  $\Lambda^\ast([0,\delta])>0$ for all $\delta>0$.
\end{example}

\subsubsection{Resampling systems with compact state spaces and function-valued duals}
\label{ss.compstat.fkt}
\begin{example}[Fleming-Viot process with mutation]\label{ex:FVmut}
  Now, we add mutation to the Fleming-Viot process as introduced in Example~\ref{ex:FV}, which requires a different dual process. More precisely, the mutation operator reads for $F\in\mathcal D_X$ as in \eqref{eq:17}
  \begin{align}\label{eq:18a}
    G_X^{\rm mut}F(x)= \vartheta \int\limits^1_0 \frac{\partial F}{\partial x}
    [u] \left(\beta(u,dv)-x(du)\right),
  \end{align}
  where we refer to $\vartheta \ge 0$ as the \emph{mutation rate}, and  $\beta(\cdot,\cdot)$ is a stochastic kernel on $I$, denoting the  \emph{mutation kernel}.

  The operator of the process we aim to show existence for is given by
  \begin{align}
    \label{eq:18}
    G_X=G_X^{\rm res} + G_X^{\rm mut},
  \end{align}
  where $G_X^{\rm res}$ is given by the right hand side of \eqref{eq:16a} and $G_X^{\rm mut}$ is as in \eqref{eq:18a}. We will use here, different from the previous examples, a \emph{function-valued} dual process, i.e.\ a process with state space
  \begin{align}
    \label{eq:132b}
    E_Y \coloneqq \bigcup_{n=0}^\infty \Pi_n, \qquad \Pi_n \coloneqq
    \mathcal C(I^n),
  \end{align}
  where $\Pi_0$ consist of all constants. Moreover, we view $y \in \Pi_n$ as a continuous function with domain $I^{\mathbb N}$, depending only on $n$ coordinates, i.e.\ $E_Y \subseteq \mathcal  C(E_U)$ if we choose now $E_U = I^{\mathbb N}$. For $y\in E_Y$, we write $|y| = n$ if $y\in \mathcal C(I^n)$.

  For the \emph{duality function}, we set
  \begin{align}
    \label{eq:134b}
    H(x,y) \coloneqq \langle x^{\otimes \mathbb N},
    y\rangle \coloneqq \langle x^{\otimes |y|}, y\rangle \coloneqq
    \int x(du_1) \cdots x(du_{|y|}) \,y(u_1,\dots,u_{|y|}).
  \end{align}
  So, in words, $H(x,y)$ is computed by choosing elements $u_1,\dots,u_{|y|}$ independently from $x$, and evaluating them according to the function $y$. Setting   \begin{align}
      \label{eq:135c}
      \theta_{ij}(u_1,\dots,u_n) = (u_1,\dots, u_i,\dots,u_{j-1}, u_i,
      u_{j+1}, \dots, u_{n})
    \end{align}
    (and note that $i\in \{1,j-1\}$ and $j=n$ is possible  and again we abuse notation for $\theta$ from Examples~\ref{ex:FV} and~\ref{ex.can-mod}). Note that $G_X$ takes the special form, when applied to $H(.,y) \in \mathcal H_X$, 
    \begin{align}
      \label{eq:136new}
      \begin{split}
        G_XH(\cdot,y)(x)
      & = \sum_{1\leq i<j\leq |y|} \langle x^{\otimes \mathbb N},
        y\circ \theta_{ij} - y\rangle + \vartheta \sum_{1\leq k\le |y|}
        \langle x^{\otimes \mathbb N}, \beta_k y - y\rangle.
      \end{split}
    \end{align}
  Next, we claim that $\mathcal H_X$ is a convergence determining set of functions, and $1\in\mathcal H_X$. For this, recall that by Le Cam’s theorem \cite{LC57} (see also \cite{LoehrRippl2016}), the set of functions $\mathcal{H}_X \subseteq \mathcal C_b(E_X)$ on a completely regular Hausdorff space $E_X$ is convergence determining for Radon probability measures, if it is multiplicatively closed and induces the topology of $E_X$. In our case, $\mathcal{H}_X \coloneqq \{x\mapsto \langle x^{\mathbb N}, y\rangle: y\in E_Y\} \subseteq \mathcal C_b(E_X)$ is multiplicatively closed and for $x,x_1, x_2,... \in E_X$
  \begin{align}
    \label{eq:36}
    (x_n \xrightarrow{n\to\infty} x)
    \iff (x_n^{\otimes \mathbb N} \xrightarrow{n\to\infty} x^{\otimes \mathbb N} ) \iff ( \langle x_n^{\otimes \mathbb N}, y\rangle \xrightarrow{n\to\infty} \langle x^{\otimes \mathbb N}, y\rangle \text{ for all $y\in E_Y$}).
  \end{align}
  Hence, $\mathcal{H}_X$ induces the weak topology on $E_X$ and Le Cam's theorem implies that $\mathcal{H}_X$ is convergence determining.

  For the \emph{dynamics of the dual process}, let $Y$ be the Markov jump process, which jumps from $Y_t=y$ 
  \begin{enumerate}[(i)]
  \item for all $1\leq i<j\leq |y|$,
    \begin{align}
      \label{eq:135b}
      \text{ to } \; y \circ \theta_{ij}\text{ at
      rate $1$},
    \end{align}
    noting that $|y\circ \theta_{ij}| = |y|-1$;
  \item for all $1\leq k \leq |y|$ 
    \begin{align}
      \label{eq:135d}
      \text{ to } \; \beta_k y \text{ at rate $\vartheta$},
    \end{align}
    where $\beta_k y(u_1,\dots,u_n) \coloneqq \int
    y(u_1,\dots,u_{k-1},v,u_{k+1},\dots,u_n) \, \beta(u_k,dv)$.
  \end{enumerate}
  Since $Y$ is a pure jump process with bounded jump rates, $Y$ is the unique solution of the martingale problem $G_Y$ for
  \begin{align}
    G_Y H(x,.)(y) & := G_X H(.,y)(x),
  \end{align}
  where the right hand side is from~\eqref{eq:136new}. This already shows that \eqref{eq:dual3new} holds (with $\beta=0$).

  In order to apply Theorem~\ref{T1}, we start with \eqref{eq:toshow} using Proposition~\ref{l.480a} and check the assumptions (i)-(iv) made there. Since $Y$ is Feller, and $E_Y$ is dense in $\mathcal C(E_U)$ as an algebra containing~1 due to the Stone-Weierstrass theorem, (i) and (iii) hold. For (iv), i.e.\ the positivity of $P_t^x$, let us have a closer look at the two possible transitions of $y$ from above. If $y \geq 0$, note that $y\circ \theta_{ij} \geq 0$ and $\beta_k y \geq 0$. Writing $Y^y$ for the process $Y$ when started in $y$, and looking at the transitions of $Y^y$, it is clear that $y\mapsto Y_t^y$ is linear and $Y_t^y \geq 0$ as well, and consequently $y\mapsto \mathbb   E[H(x,Y_t^y)]$ is a positive linear form. In addition, if $y=1$, then $Y_t^y=1$, so $\mathbb E[H(x,Y_t^1)]=1$, which shows that all properties of (iii) hold.

  For (iv), we will make use of the reformulation given in Corollary~\ref{cor:htc_abc}(iv') to verify \eqref{eq:abc}. Therefore, we define the set of permutations on $\mathbb N$
  \begin{align}
    \label{eq:Sigma}
    \Sigma = \bigcup_{n=0}^\infty \Sigma_n,
    \qquad \Sigma_n \coloneqq \{\sigma: \mathbb N\to\mathbb N \text{ bijective}, \sigma|_{\{n, n+1,\dots\} = \operatorname{id}}\}.
  \end{align}
  and the set of functions
  \begin{align}
    \label{eq:mathcalF1}
    \mathcal F = \{f_\sigma: E_U \to E_U, u \mapsto u_\sigma \text{ with }
    \sigma\in \Sigma, f \in C_b(U)\},
  \end{align}
  where $u_\sigma = (u_{\sigma(1)}, u_{\sigma(2)},\dots)$. Since $x^{\otimes \mathbb N}$ is exchangeable, we have that $H(x,y\circ f_\sigma) = H(x,y)$ as well as
  \begin{equation}
    \begin{aligned}
      G^{\text{res}}_YH(x,\cdot)(y\circ f_\sigma) =
      G^{\text{res}}_YH(x,\cdot)(y), \quad G^{\text{mut}}_YH(x,\cdot)(y\circ
      f_\sigma) = G^{\text{mut}}_YH(x,\cdot)(y).
    \end{aligned}
  \end{equation}
  Let us turn to the proof of \eqref{eq:abc} for an $E_U$-valued random varibale $U$ with $U\sim f(U)$ for all $f\in\mathcal F$ and $E_U$-valued random variable. In other words, $U$ is exchangeable and using the de Finetti's theorem we obtain for such $U$ that there is an $E_X = \mathcal M_1(I)$-valued random variable $X$ such that $U \sim X^{\mathbb N}$ conditional on $X$. In other words, \eqref{eq:abc} holds. Hence, Proposition~\ref{l.480a} gives \eqref{eq:toshow}.

  For the measurability, Proposition~\ref{l3} gives \eqref{eq:toshow0} since $\mathcal{H}_X$ is convergence determining. So, we have shown all assumptions in Theorem~\ref{T1} and we obtained a Feller process $X$ as a solution of the $G_X$-martingale problem. Moreover, there exists a \cadlag modification of $X$ by Proposition~\ref{p.loccomp-cadl}. Again, since $G_X$ is a second order operator, this solution has continuous paths by Proposition~4.5 in \cite{DGP12}.
\end{example}

\begin{example}[Interacting Fleming-Viot and Cannings]
  \label{ex.1249}
  Here we add \emph{space} and \emph{migration} to the Fleming-Viot process or the Cannings process from Examples~\ref{ex:FV} and \ref{ex.can-mod}. The approach is based on Remark~\ref{cor.814}. For some countable, discrete Ablian group $G$, we assume that $a: G\times G \to \mathbb R$ is a transition kernel such that $a(\xi,\eta) = a(0, \eta - \xi)$ with $\sum_\xi a(0,\xi) < \infty$. Setting $E_X = (\mathcal M_1(I))^{\mathbb G}$ and $E_Y = \bigcup_{m=0}^\infty G^{m} \times \mathcal C(I^{m})$, we use for $\underline\xi\in \mathbb G^{m}$ the probability measure $x_{\underline \xi} \coloneqq x_{\xi_1} \otimes \cdots \otimes x_{\xi_m}$ to define the duality function
  \begin{align}
    \label{eq:1002}
    H(x; \underline\xi,y) \coloneqq  \langle x_{\underline\xi},
    y\rangle \coloneqq  \int x_{\underline\xi}(du) y(u).
  \end{align}
  The migration operator reads
  \begin{align}
    \label{eq:136newb}
    \begin{split}
      G_X^{(1)}H(\cdot; \underline\xi, y)(x)
      & = \sum_{i=1}^{|y|} \sum_{\eta \in \mathbb G} a(\xi_i,\eta)
      \langle x_{\underline\xi_i^\eta} - x_{\underline\xi} , y\rangle,
    \end{split}
  \end{align}
  where $\underline\xi_i^\eta = (\xi_1,\dots,\xi_{i-1}, \eta, \xi_{i+1},\dots)$, whereas resampling and mutation operator are joined in
  \begin{align}
    \label{eq:136newc}
    \begin{split}
      G_X^{(2)}H(\cdot; \underline\xi, y)(x)
      & = \sum_{1\leq i<j\leq |y|} \ind{\xi_i = \xi_j}\langle
      x_{\underline\xi}, y\circ \theta_{ij} - y\rangle + \vartheta
      \sum_{1\leq k\le |y|} \langle x_{\underline\xi}, \beta_k y -
      y\rangle.
    \end{split}
  \end{align}
  The dual process $Y$ is a system of delayed coalescing random walks. More precisely, the $\underline\xi$-component moves according to the random walk kernel $a$, and the function-component $y$ follows \emph{coalescence} of coordinates $i$ and $j$ at \emph{unit rate}, if $\xi_i = \xi_j$, and changes to $\beta_k y$ at rate $\vartheta$. With $G_Y^{(i)}H(x,\cdot)(\xi,y) \coloneqq  G_X^{(i)}H(.;\underline\xi, y)(x)$, $i=1,2$, note that the dynamics of the $\underline\xi$-component happens according to $G_Y^{(1)}$, and of the function-component $y$ according to $G_Y^{(2)}$.

  We sketch the application of Remark~\ref{cor.814}, i.e.\ we need to check conditions (a), (b), (c) of Theorem~\ref{T572}: As in Example~\ref{ex:FVmut}, span$(\mathcal H_X)$ is a convergence determining algebra of functions containing~1, i.e.\ (a) holds. Then, the $(G_Y^{(i)}, \mathcal H_Y)$ martingale problems for $i=1,2$ are well-posed (for $G_Y^{(1)}$, we obtain a system of random walks, for $G_Y^{(2)}$, we obtain independent coalescence processes), as well as the $(G_Y \coloneqq G_Y^{(1)} + G_Y^{(2)},\mathcal H_Y)$ martingale problem, its solution being a Markov jump process, the delayed spatial coalescent with random walk  kernel $a$. Since both dual processes have bounded rates, they are Feller and (iii) of Proposition~\ref{l.480} holds. For (iv) of Proposition~\ref{l.480}, we have to show that the (well-defined) \emph{mass flow} induced by the transition kernel $a$ as given through $G_X^{(1)}$ is in duality with independent random walks on $\mathbb{G}$. This is well-known; cf.\ \cite{DGV95}, and uses that $G^{(1)}_X$ is a first order operator; see Remark~\ref{rem:dsomp}. In particular, this gives well-posedness of the $(G_X^{(1)}, \mathcal H_X)$ martingale problem and therefore existence of $\mu_t^{(1)}$, as indicated in Remark~\ref{cor.814}. For $G_X^{(2)}$, we see that a $\mathbb G$-indexed family of processes, distributed independently as solutions of the $(G_X, \mathcal H_X)$-martingale problem from Example~\ref{ex:FVmut}, is the unique solution of the $(G^{(2)}_X \mathcal H_X)$ martingale problem. Altogether, (b) of Theorem~\ref{T572} holds. For (c), we note that $Y$ (with generator $G_Y^{(1)} + G_Y^{(2)}$ again is a Markov jump process with bounded jump rates, hence Feller. So, Theorem~\ref{T572} gives well-posedness of the $(G_X, \mathcal H_X)$ martingale problem, and a modification with \cadlag paths. Again, since $G_X$ is a second order operator,  this solution has continuous paths by Proposition~4.5 in \cite{DGP12}.
\end{example}

\begin{remark}[Extensions of Example~\ref{ex.1249}]
  The above construction reproduces in particular the existence result
  of the system of \emph{interacting Fleming-Viot processes} on the
  discrete hierarchical group, discussed in
  \cite{EvansFleischmann1996} and gives existence on general discrete
  abelian groups in an alternative way to \cite{DGV95}.

  In order to obtain the spatial Cannings model from \cite{GHKK14}, we
  would then only need to check that Theorem~\ref{T1} is applicable to
  the non-spatial Cannings process, since the operator in that case is
  an integral operator and not a second order differential operator,
  the argument in Example~\ref{ex:FV} has to be adapted at the point
  of the calculation done in~\eqref{eq:139}, which are based on the
  property~\eqref{eq:7} of the dual dynamics and has been detailed in
  Example~\ref{ex.can-mod} for the non-spatial Cannings model. Hence
  this provides the details for the argument in \cite{GHKK14}.
\end{remark}

\begin{example}[Spatial Lambda-Fleming-Viot process]
\label{ex:slvf}
  In a series of papers, Etheridge and co-authors have introduced and studied the spatial $\Lambda$-Fleming-Viot process \cite{BEV10,EtheridgeFreemanStraulino2017,    EtheridgeKurtz2019,EtheridgeVeberYu2020}. The model has been studied in more detail and extended in \cite{VeberWakolbinger2015,  chetwynd2019rare, louvet2022growth}. In early papers, the existence of the process via the solution of a martingale problem is actually obtained using duality via the same approach as in \cite{Evans1997}; see \cite{BEV10}. Later papers show existence by using convergence of approximate models; see e.g.\ \cite{EtheridgeVeberYu2020}. Here, we will describe how existence of a solution of the martingale problem in its most basic form can be constructed using duality as an application of Corollary~\ref{cor.480}. Here, we will use a novel function-valued dual. Dual processes similar in spirit have been used, but they were not function-valued. 
  
  We will use the spatial $\Lambda$-Fleming-Viot model with type space $\{0,1\}$ excluding mutation and selection.

  Fix $d \in \mathbb N$, set $\mathbb E := \mathbb R^d$, as well as the state space of the process $X$, which is
  \begin{align}
    \label{eq:40}
    E_X := \{x \text{ measure on } \mathcal B(\mathbb E \times
    \{0,1\}): \pi_\ast x = \lambda\},
  \end{align}
  where $\pi: \mathbb E \times \{0,1\} \to \mathbb E$ is a projection and $\lambda$ is Lebesgue-measure on $\mathbb E$. (Some $x\in E_X$ models a population with constant density across $\mathbb E$, but the density of types $\kappa \in \{0,1\}$ may vary.) Equipping $E_X$ with the vague topology makes it a compact metric space; see Lemma~1.1 in  \cite{VeberWakolbinger2015}. Following Theorem~3.4 of  \cite{Kallenberg2021}, recall that for each $x\in E_X$, there is a Markov kernel from $\mathbb E$ to $\{0,1\}$ such that 
  \begin{align}
      \label{eq:w}
      \int f(u, \kappa) x(du, d\kappa) = \int \lambda(du) \int w(u, d\kappa) f(u, \kappa) \text{ for all }f\in\mathcal C_c(\mathbb E \times \{0,1\}).  
  \end{align}
  We start with an informal description of the process. Let $\mu$ be a $\sigma$-finite measure on $(0,\infty) \times (0,1]$ such that
  \begin{align}
    \label{eq:spatialcond}
    \int r^d p  \mu(dr, dp) < \infty,
  \end{align}
  and $\Pi$ be a Poisson process on $[0,\infty) \times \mathbb E\times (0,\infty) \times (0,1]$ with intensity measure $dt \otimes \lambda(dv) \otimes \mu(dr, dp)$. Then, for $(t,v,r,p) \in \Pi$, and if the current state of the process is $x$, set $K=\kappa$ with probability proportional to $x(B_r(v), d\kappa)$, $\kappa \in \{0,1\}$, and the process changes to
  \begin{align*}
    x_{v, r, p, K}(du, d\kappa)
    & := 1_{|u-v|\geq r} x(du, d\kappa) + 1_{|u-v|<r} \Big( (1-p)x(du,
      d\kappa) + p \lambda(du) \otimes \delta_K(d\kappa)\Big) \\
    & = \lambda(du) ( p1_{|u-v|<r} \delta_K(d\kappa) + (1 -
      p1_{|u-v|<r}) w(u, d\kappa)).
  \end{align*}
  This means that the offspring of one parent, chosen at random from $B_r(v)$ replaces a fraction $p$ of the total population within $B_r(v)$. The offsrpting inherits the type of their parents. Since $\lambda \otimes \mu$ is an infinite measure, there are infinitely many events within each time interval, and the issue of existence of such a process must be answered.

  For the function-valued dual process, let
  \begin{align}
    \label{eq:4432}
    E_Y = \bigcup_{n=1}^\infty \Pi_n, \qquad \Pi_n := \Big\{y\in
    \mathcal C_c(\mathbb E^{n}): y\geq 0, \int y(u) \lambda(du) =
    1\Big\}
  \end{align}
  and set $|y|=n$ if $y\in\Pi_n$, i.e.\ $\Pi_n$ consists of densities for samples of size $n$ with compact support. For the duality function, we use $y\in E_Y$ as well as the duality function (recall $w$ from \eqref{eq:w} and set $u:=(u_1,\dots,u_{|y|})$)
  \begin{align}
    \label{eq:spat_dual}
    H(x,y)
    & = \int \bigotimes_{i=1}^{|y|} \lambda(du_i) y(u) \int
      \bigotimes_{i=1}^{|y|} w(u_i, d\kappa_i) \kappa_1\cdots
      \kappa_{|y|}.
  \end{align}
  In words, this gives the probability of choosing a sample of $|y|$ individuals of type~1, if they are sampled according to the density $y$. On such functions, we are going to show existence of a process solving the martingale problem for the operator
  \begin{equation}
    \label{eq:spat_GX}
    \begin{aligned}
      G_X H(.,y)(x)
      & = \int \lambda(dv) \int  \mu(dr, dp) \frac{1}{|B_r(0)|} \int
        \lambda(du') 1_{|v-u'|<r} \int w(u', d\kappa') \\
      & \qquad \qquad \qquad  \cdot \int (x^{\otimes |y|}_{v,
        r, p, \kappa'}(du, d\kappa) - x^{\otimes |y|}(du,
        d\kappa))\kappa_1 \cdots \kappa_{|y|} y(u) .
    \end{aligned}
  \end{equation}
  In order to evaluate the right hand side and find a function-valued dual, observe that
  \begin{equation}
  \label{eq:spat_prod}
      \begin{aligned}
      x^{\otimes n}_{v, r, p, \kappa'}
        & (du, d\kappa) - x^{\otimes n}(du, d\kappa) \\
        & = \sum_{k=0}^n \sum_{\substack{I \subseteq \{1,\dots,n\}}{|I|=k}}\bigotimes_{i=1}^n \lambda(du_i) \Big(\prod_{i\in I}
          p1_{|u_i-v|<r} \Big( \bigotimes_{i\in I}
          \delta_{\kappa'}(d\kappa_i) - \bigotimes_{i\in I} w(u_i,
          d\kappa_i)\Big) \\
        & \qquad \qquad \qquad \qquad \qquad \qquad \qquad \qquad
          \cdot \prod_{i\notin I} (1-p1_{|u_i-v|<r}) \bigotimes_{i
          \notin I} w(u_i, d\kappa_i),
      \end{aligned}
  \end{equation}
  and we can write
  \begin{align}
  \label{eq:spat_1}
    G_X H(\cdot,y)(x)
    & = \sum_{k=1}^n
  \sum_{\substack{I\subseteq \{1,\dots,n\}}{|I| = k}}   A_{n,I} (x,y),
      \intertext{with $k\coloneqq |I|$ and some $\iota \in I$ (plugging
      \eqref{eq:spat_prod} in \eqref{eq:spat_GX} for
      \eqref{eq:spat_2}, evaluating the integrals with respect to
      $\kappa_i, i\notin I$ for \eqref{eq:spat_3}, and using $-\prod
      1_{|u_i-v|<r} = (1-\prod 1_{|u_i-v|<r}) - 1$ for
      \eqref{eq:spat_4})} \label{eq:spat_2} A_{n,I} (x,y)
    & = \int \lambda(dv) \int  \mu(dr, dp) p^k \frac{1}{|B_r(0)|} \int
      \lambda(du') 1_{|v-u'|<r} \int w(u',d\kappa') \\ \notag
    & \qquad \cdot \Big( \int \bigotimes_{i=1}^n \lambda(du_i)
      \prod_{i\in I} 1_{|u_i-v|<r} \Big( \int \bigotimes_{i\in
      I}\delta_{\kappa'}(d\kappa_i) \kappa_i - \int \bigotimes_{i\in
      I} w(u_i, d\kappa_i)\Big) \\ \notag
    & \qquad \qquad \qquad \qquad \qquad
      \cdot \int \prod_{i\notin I} (1 - p1_{|u_i-v|<r}) \int
      \bigotimes_{i \notin I} w(u_i, d\kappa_i) y(u) \kappa_1 \cdots
      \kappa_n \Big) \\
   & \label{eq:spat_3}
     = \int \mu(dr, dp) p^k |B_r(0)| \cdot \Big( \int \lambda(du')
     \int \bigotimes_{i\notin I} \lambda(du_i) \\
    \notag
    & \qquad \Big( \frac{1}{|B_r(0)|^2} \int  \lambda(dv) 1_{|v-u'|<r}
      \int \bigotimes_{i\in I} \lambda(du_i) \prod_{i\in I}
      1_{|u_i-v|<r} \prod_{i\notin I} (1 - p1_{|u_i-v|<r}) \Big) \\
    \notag
    & \quad  \qquad \cdot \Big( \int w(u', d\kappa') \kappa' - \int
      \bigotimes_{i\in I} w(du_i, d\kappa_i) \prod_{i\in I}
      \kappa_i\Big)  \int \bigotimes_{i\notin I} w(u_i, d\kappa_i)
      y(u) \prod_{i\notin I} \kappa_i \Big) \\
    & \label{eq:spat_4} = \int \mu(dr, dp) p^k |B_r(0)| \cdot \Big(
      \int \lambda(du') \int \bigotimes_{i\notin I} \lambda(du_i) \\
    \notag
    & \qquad \Big( \frac{1}{|B_r(0)|^2} \int  \lambda(dv) 1_{|v-u'|<r}
      \int \bigotimes_{i\in I} \lambda(du_i) y(u)  \prod_{i\in I}
      1_{|u_i-v|<r} \prod_{i\notin I} (1 - p1_{|u_i-v|<r}) \Big) \\
    \notag
    & \qquad \qquad \qquad \qquad \qquad \qquad \qquad \qquad
      \cdot\int w(u', d\kappa') \kappa'  \int \bigotimes_{i\notin I}
      w(u_i, d\kappa_i) \prod_{i\notin I}\kappa_i \\
    \notag
    & \qquad + \Big( \int \bigotimes_{i = 1}^n \lambda
      (du_i)\frac{1}{|B_r(0)|^2} \int \lambda(du') \int  \lambda(dv)
      \int 1_{|v-u'|<r} 1_{|u_\iota-v|<r} y(u) \\
    \notag
    & \qquad \qquad \qquad \cdot \Big(1 - \prod_{i\in I\setminus
      \{\iota\}} 1_{|u_i-v| < r} \prod_{i\notin I} (1 -
      p1_{|u_i-v|<r}) \Big)\Big) \int \bigotimes_{i=1}^n w(u_i,
      d\kappa_i) \prod_{i=1}^n \kappa_i \\
    \notag
    & \qquad \qquad \qquad \qquad \qquad \qquad -
      \int \bigotimes_{i=1}^n \lambda(du_i) y(u) \int
      \bigotimes_{i=1}^n w(u_i, d\kappa_i) \prod_{i=1}^n
      \kappa_i\Big).
  \end{align}
  We interpret the right hand side saying that $y$ jumps to $y'_{I,r,p} + y''_{I,r,p}$ with $y'_{I,r,p} \in \Pi_{n-k+1}$ (note that $1\leq n-k+1\leq n$ since $1\leq |I|=k\leq n$) and $y''_{I,r,p} \in \Pi_n$ at rate $\mu(dr, dp) p^k(1-p)^{n-k} |B_r(0)|$, where
  \begin{align}
    \label{eq:spat_5a}
    y'_{I,r,p}(u', (u_i)_{i\notin I})
    & = \frac{1}{|B_r(0)|^2} \int  \lambda(dv) 1_{|v-u'|<r} \int
      \bigotimes_{i\in I} \lambda(du_i) y(u)  \prod_{i\in I}
      1_{|u_i-v|<r} \prod_{i\notin I} (1 - p1_{|u_i-v|<r}) \\
    \label{eq:spat_5b} y''_{I,r,p}(u)
    & = \frac{1}{|B_r(0)|^2} \int \lambda(du') \int  \lambda(dv)
      1_{|v-u'|<r} 1_{|u_\iota-v|<r}  y(u) \\
    \notag
    & \qquad \qquad \qquad \qquad \qquad \qquad
      \cdot \Big(1 - \prod_{i\in I\setminus \{\iota\}} 1_{|u_i-v| < r}
      \prod_{i\notin I} (1 - p1_{|u_i-v|<r}) \Big).
  \end{align}
  Note that by construction 
  \begin{align}
    \int \lambda(du') \bigotimes_{i\notin I} \lambda(du_i)
    y'_{I,r,p}(u', (u_i)_{i\notin I}) + \int \bigotimes_{i=1}^n
    \lambda(du_i) y''_{I,r,p}(u) = 1 
  \end{align}
  and we can change variables in $y'_{I,r,p}$ (i.e.\ changing $u'$ with $u_\iota$) such that $y'_{I,r,p} + y''_{I,r,p}$ depend on the same variables $u_1,\dots,u_n$.

  Let us use these calculations in order to show existence and uniqueness of the $G_X$-martingale problem. We will use Theorem~\ref{T1}, in particular Corollary~\ref{cor.480}, using the dual process $Y = (Y_t)_{t\geq 0}$. We argue as follows: 
  First, $\text{span}(\mathcal H_X)$ is a convergence determining algebra since $x_n \to x$ vaguely if and only if $\int y(u) x_n(du) \xrightarrow{n\to\infty} \int y(u) x(du)$ for all $y \in \mathbb  C_c(\mathbb E)$; see also Lemma~1.1 in \cite{EtheridgeVeberYu2020}. Second, note that for $Y_t = y \in \Pi_j$, the dual process jumps to $y'_{I,r,p}+y''_{I,r,p}\in\Pi_n$ at rate $\int \mu(dr, dp) p^k |B_r(0)| < \infty$, as we seen above. Note that this rate is bounded by the left hand side of \eqref{eq:spatialcond}, such that $Y$ is the unique solution of its martingale problem and \eqref{eq:dual3new} holds (with $\beta=0$) for the function-valued pure jump process $Y$, which is $\mathcal C_b(E_Y)$-Feller. So, \eqref{eq:toshow0} and \eqref{eq:toshow} follow from Propositions~\ref{l3} and~\ref{l.480} provided we can show Proposition~\ref{l.480}(iv). For this, we use Corollary~\ref{cor.480} and argue similarly as in the proof of Theorem~1.2 of \cite{EtheridgeVeberYu2020}.   We use an approximating sequence of models $X^1, X^2,...$ with duals $Y^1, Y^2,...$, which arise by restricting for $X^n$ to reproduction events on $(-n,n)^d$ and some finite $\mu^n\leq \mu$ on $(0,\infty) \times (0,1]$, such that $\mu^n \uparrow \mu$ as $n\to\infty$. For these dual pairs $(X^n, Y^n)$, the construction guarantees:
  \begin{enumerate}
      \item[(i)] The dual processes converge, i.e.\ $Y^n \xRightarrow{n\to\infty} Y$, since $Y$ is a pure Markov jump process (with finite jump rate), and jumping intensities converge;
      \item[(ii)] The martingale problems for $X^1, X^2,...$ are well-posed and unique solutions of the corresponding martingale problems, since $X^n$ is a pure Markov jump process with finite jump rate.
  \end{enumerate}
  From Corollary~\ref{cor.480}, we see that Proposition~\ref{l.480}(iv) holds and thus, we have shown existence and uniqueness of the $(G_X, \mathcal H_X)$-martingale problem. 
\end{example}

\subsection{Locally-compact state spaces -- branching systems}
\begin{example}[Continuous state branching processes]
  \label{ex:CSBP}
  \sloppy For the construction of a superprocess, E.~Dynkin uses in \cite{Dynkin1993} what he calls the direct construction, which can be viewed as a duality argument. In fact, this approach is connected to Theorem~\ref{T1} which we demonstrate now for simplicity for a  \emph{non}-spatial branching system.

  The state space of the process that we wish to construct is $E_X=\R_+$. To define the operator let $b \in \mathbb R$, $c\in \mathbb R_+$ and let $N$ be a measure on $[0,\infty)$ with $\int_0^\infty (s\wedge s^2) N(ds) < \infty$ and $\int_{\,0+} s^2 N(ds) = 0$. We set
  \begin{align}
    \label{eq:21}
    \mathcal D_X = C_c^2(\R_+),
  \end{align}
  where $C_c^2(\R_+)$ denotes the set of twice continuously
  differentiable real-valued functions on $\R_+$ with compact support. The operator for the process we aim to construct is given by (see \cite{DawsonLi2006} eq. (5.23) for a more general case)
  \begin{align}
    \label{eq:20}
    G_X f(x) = b x f'(x) + c x f''(x) + x \int_0^\infty (f(x+s) - f(x)
    - s f'(x))\, N(ds).
  \end{align}
  Note that for $N=0$, this is the generator of a Feller diffusion with drift. Let $H: \R_+ \times \R_+$ with $H(x,y)=e^{-xy}$ and let $Y^y$ be the deterministic process satisfying $Y_0=y$ and solving
  \begin{align}
    \label{eq:laplace}
    \dot Y = - \Psi(Y) \text{ with } \Psi(y) = by + cy^2 +
    \int_0^\infty(e^{-sy} - 1 + sy)N(ds).
  \end{align}
  Here, $\Psi$ is usually referred to as the \emph{branching mechanism}. The generator of $Y$ is given by
  \begin{align}
    \label{eq:13}
    \begin{aligned}
      G_Y e^{-x.}(y) & = -\Psi(y) \frac{\partial}{\partial y} e^{-xy}
      = x\Psi(y) e^{-xy}
      = \Big(by + cy^2 + \int_0^\infty(e^{-sy}-1+sy)N(ds)\Big)x e^{-xy} \\
      & = b x \frac{\partial}{\partial x} e^{-xy} + c x
      \frac{\partial^2}{\partial x^2} e^{-xy} + x\int_0^\infty
      \Big(e^{-(s+x)y} - e^{-xy} - s \frac{\partial}{\partial x}
      e^{-xy}\Big) N(ds).
    \end{aligned}
  \end{align}
  Then, for \eqref{eq:toshow}, we need to find a random variable $X_t$ such that, for $Y_t^y$ solving \eqref{eq:laplace} with $Y_0=y$,
  \begin{align}
    \label{eq:14}
    \mathbb E_x[e^{-yX_t}] = e^{-xY^y_t} \eqqcolon \psi(y).
  \end{align}
  So, we need to see if $\psi(y)$ is the Laplace transform of some $\R_+$-valued random variable. This is equivalent to the following four conditions: (i) $\psi$ is continuous, (ii) $\psi$ is positive definite, (iii) $\psi\geq 0$ and (iv) $\psi(0)=1$. See for instance  \cite[Corollary~4.5, p.~114]{Berg1984} for the case of finite measures and note that (iv) ensures that we have a probability measure. Clearly, (i), (iii) and (iv) are satisfied. Condition (ii) is equivalent to the requirement that $y\mapsto Y_t^y$ is negative definite; see \cite[Proposition~6.10, p.~133]{Berg1984}. This,  however, is proved in \cite[Proposition 3.2(v)]{Beznea2011}, and hence, we have shown \eqref{eq:toshow}. Finally, \eqref{eq:toshow0} follows as in Proposition~\ref{l3}. Since $Y$ is Feller, $X$ is  Feller as well. For path regularity, the compact containment condition for $X$ can be proved using a priori moment bounds to get compact containment for fixed times $t$ and then using Doob's inequality to for the argument on the paths space. Then the existence of a \cadlag modification follows;  compare with Remark~\ref{rem:R1}.
\end{example}

\noindent
Now we give an example with $\beta\neq 0$ in the duality relation \eqref{eq:toshow}. We prepare this example with two lemmas.

\begin{lemma}
  \label{L:jp}
  Let $Y = (Y_t)_{t\geq 0}$ be a pure jump process with countable state space and denote by $y_0$ the start point of $Y$ and by $Y_k$ the state of $Y$ after the $k$th jump, $k=1,2,\dots$. Moreover, the total jump rate of $Y$ in state $y'$ is denoted $\gamma(y')$ and the jump rate from $y'$ to $y''$ by $\gamma(y'\to y'')$. Then, for any $f$,
  \begin{align}
    \mathbb E\Big[ f(Y_t) \cdot \exp\Big( \int_0^t \gamma(Y_s) ds\Big)\Big]
    = \sum_{n=0}^\infty \frac{t^n}{n!}  \sum_{y_1, \dots, y_n}
    f(y_n) \prod_{k=0}^{n-1} \gamma(y_k\to y_{k+1}),
  \end{align}
  where $\prod_{k=0}^{-1}\coloneqq 1$, if the right hand side exists.
\end{lemma}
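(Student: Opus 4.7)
The plan is to prove this by conditioning on the entire sample path of $Y$ up to time $t$, described by the sequence of visited states $y_0, y_1, \dots, Y_{N_t}$ (with $N_t$ the number of jumps in $[0,t]$) and the jump times $0 < t_1 < t_2 < \dots < t_{N_t} < t$. The key observation will be that the Feynman-Kac weight $\exp(\int_0^t \gamma(Y_s)\,ds)$ exactly cancels the survival/holding probabilities in the path density, which turns the integrals into Lebesgue integrals over simplices.

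More concretely, first I would recall that, conditionally on $\{N_t = n\}$ and on the visited states $y_0, y_1, \dots, y_n$, the joint law of the jump times $(t_1,\dots,t_n)$ together with the event $\{N_t = n, Y_k = y_k \text{ for } k=1,\dots,n\}$ has density (against Lebesgue measure on $\{0 < t_1 < \dots < t_n < t\}$)
\begin{align*}
  \Big(\prod_{k=0}^{n-1} e^{-\gamma(y_k)(t_{k+1}-t_k)} \gamma(y_k\to y_{k+1})\Big)
  \cdot e^{-\gamma(y_n)(t-t_n)}.
\end{align*}
On such a path, $\int_0^t \gamma(Y_s)\,ds = \sum_{k=0}^{n-1}\gamma(y_k)(t_{k+1}-t_k) + \gamma(y_n)(t-t_n)$, so that $\exp(\int_0^t \gamma(Y_s)\,ds)$ precisely cancels all the exponential holding factors in the above density. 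Substituting everything into the expectation gives
\begin{align*}
  \mathbf E\Big[f(Y_t) \exp\Big(\int_0^t \gamma(Y_s)\,ds\Big)\Big]
  = \sum_{n=0}^\infty \sum_{y_1,\dots,y_n} f(y_n) \prod_{k=0}^{n-1}\gamma(y_k\to y_{k+1})
    \int_{0<t_1<\dots<t_n<t}\!\!dt_1\cdots dt_n,
\end{align*}
and the simplex integral equals $t^n/n!$, yielding the claimed identity.

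Next I would address the only genuinely nontrivial point, namely justifying the rearrangements: the interchange of expectation with the sum over the number of jumps, the sum over the state sequence, and the time integral. Under the assumption that the right-hand side exists (which is the hypothesis of the lemma), all terms above are nonnegative (if $f \geq 0$) so Tonelli's theorem applies directly; for general $f$ I would split $f = f^+ - f^-$ and apply the identity to each piece, noting that the right-hand side for $|f|$ dominates the absolute convergence of both.

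The only potential obstacle is bookkeeping around the last (incomplete) holding interval $[t_n,t)$ and ensuring the boundary cases $n=0$ (the convention $\prod_{k=0}^{-1} = 1$) and $\gamma(y)=0$ (absorbing states) are handled consistently; the former is immediate since the $n=0$ contribution to the right-hand side is simply $f(y_0)$, which agrees with $\mathbf E[f(y_0)\exp(\gamma(y_0)t)\indset{N_t=0}] = f(y_0)e^{\gamma(y_0)t}\cdot e^{-\gamma(y_0)t} = f(y_0)$, and the latter amounts to a vacuous summand.
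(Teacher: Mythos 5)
Your proposal is correct and follows essentially the same route as the paper's proof: decompose over the number of jumps $N_t=n$ and the visited states, write the path density of the jump times, observe that the Feynman--Kac exponential cancels all holding-time factors, and evaluate the remaining simplex integral as $t^n/n!$. Your additional remarks on Tonelli/sign decomposition and the $n=0$ boundary case only make explicit what the paper leaves implicit.
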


\begin{proof}
  Let $N_t$ be the number of jumps before time $t$. Then for $n\geq 1$ we can compute as follows
  \begin{align*}
    \mathbb E\Big[
    & f(Y_t) \cdot \exp\Big( \int_0^t \gamma(y_s) ds\Big), N_t=n\Big] \\
    & = \sum_{y_1,\dots,y_n} \int_0^t dt_1 \gamma(y_0)
      \frac{\gamma(y_0\to y_1)}{\gamma(y_0)}e^{-\gamma(y_0) t_1}
      \int_{t_1}^t dt_2 \gamma(y_1) \frac{\gamma(y_1\to y_2)}{\gamma(y_1)}
      e^{-\gamma(y_1) (t_2-t_1)} \\
    & \qquad \qquad \qquad \qquad \cdots \int_{t_{n-1}}^t dt_n
      \gamma(y_{{n-1}}) \frac{\gamma(y_{n-1}\to y_n)}{\gamma(y_{n-1})}
      e^{-\gamma(y_{{n-1}})(t-t_{n-1})} \cdot e^{-\gamma(y_n)(t-t_n)} \\
    & \qquad \qquad \qquad \qquad \qquad \qquad
      f(y_n) e^{t_1 \gamma(y_0) + (t_2-t_1) \gamma(y_1) + \dots +
      \gamma(y_n)(t-t_{n})} \\
    & = f(y_n) \prod_{k=1}^{n} \gamma(y_{k-1}\to y_{k}) \cdot
      \int_0^t dt_1 \int_{t-t_1}^t dt_2 \cdots \int_{t-t_{n-1}}^t dt_n \\
    & = \frac{t^n}{n!} \sum_{y_1, \dots, y_n} f(y_n)
      \prod_{k=0}^{n-1} \gamma(y_k\to y_{k+1}).
  \end{align*}
  An analogous equation holds for $n=0$. Summing over $n$ gives the assertion.
\end{proof}

The following result is standard and formulated here for reference in the next example.

\begin{lemma}[Moments, Bernstein functions and Laplace transforms]
  \label{lem:bernstein-laplace}
  Let $(\mathfrak{m}_y)_{y=0,1,\dots}$ be a sequence of non-negative real numbers. Define $\psi: (0,\infty) \to \R$ by
  \begin{align}
    \label{eq:psi-lapl}
    \psi(\lambda) = \sum_{y=0}^\infty \frac{(-\lambda)^y}{y!} \mathfrak{m}_y.
  \end{align}
  Assume that for some $x > 0$ there is a function $\varphi$ so that $\psi(\lambda) = e^{-x \varphi(\lambda)}$ for all $\lambda>0$. If $\varphi$ admits the representation
  \begin{align}
    \label{eq:bernstein}
    \varphi(\lambda) = a + b\lambda + \int_{(0,\infty)} (1-e^{-\lambda r})\,
    \nu(dr),
  \end{align}
  where $a,b \ge 0$ and $\nu$ is a measure on $(0,\infty)$ satisfying $\int_{(0,\infty)} (1\wedge r ) \, \nu(dr)<\infty$, then there exists a unique non-negative measure $\mu$ on $[0,\infty)$ so that 
  \begin{align*}
    \psi(\lambda) = \int_{[0,\infty)} e^{-\lambda r} \, \mu (dr).
  \end{align*}
\end{lemma}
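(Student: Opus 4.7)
The plan is to recognize that the hypothesis puts $\psi = e^{-xf}$ into the scope of the classical Hausdorff-Bernstein-Widder theorem on Laplace transforms, by showing that $f$ is a Bernstein function and therefore $e^{-xf}$ is completely monotone on $(0,\infty)$.

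First I would verify that the representation in \eqref{eq:bernstein} exhibits $f$ as a Bernstein function, i.e.\ a non-negative function on $(0,\infty)$ whose derivative is completely monotone. The integrability hypothesis $\int_{(0,\infty)}(1\wedge r)\,\nu(dr)<\infty$ justifies differentiation under the integral sign, and a direct calculation gives $f'(\lambda) = b + \int_{(0,\infty)} r\, e^{-\lambda r}\,\nu(dr)\geq 0$ and $(-1)^{n-1} f^{(n)}(\lambda) = \int_{(0,\infty)} r^n e^{-\lambda r}\,\nu(dr)\geq 0$ for $n\geq 2$, so $f'$ is completely monotone. Next I would invoke the standard fact from the theory of Bernstein functions that if $g$ is completely monotone and $f$ is Bernstein, then $g\circ f$ is completely monotone. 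Applying this with $g(u)=e^{-xu}$, which is completely monotone for $x>0$, shows that $\psi(\lambda) = e^{-xf(\lambda)}$ is completely monotone on $(0,\infty)$.

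Finally I would apply Bernstein's theorem to conclude that $\psi$ is the Laplace transform of a unique non-negative Borel measure $\mu$ on $[0,\infty)$; the boundary value $\psi(0+) = e^{-xa}$ is finite and equals $\mu([0,\infty))$, so $\mu$ is in fact finite. The main obstacle is really just bookkeeping: one has to justify differentiation under the integral (using $\int(1\wedge r)\,\nu(dr)<\infty$ together with the exponential decay $e^{-\lambda r}$ for $\lambda>0$) and invoke the composition lemma (completely monotone composed with Bernstein is again completely monotone) from the standard theory. No more delicate analysis is needed, because $\psi(0+)$ is finite and the assumed identity $\psi = e^{-xf}$ already supplies the correct functional form — the moment-series representation \eqref{eq:psi-lapl} does not actually enter the existence proof of $\mu$; it only pins down what $\psi$ is.
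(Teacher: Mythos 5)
Your proof is correct and follows essentially the same route as the paper: the paper simply cites \cite[Theorems~3.2, 3.7 and 1.4]{SchillingSongVondracek2012} for exactly the three steps you carry out (the L\'evy--Khintchine representation \eqref{eq:bernstein} makes $f$ a Bernstein function, hence $e^{-xf}$ is completely monotone, hence by Bernstein's theorem it is the Laplace transform of a unique measure). Your explicit differentiation under the integral and the observation that $\psi(0+)=e^{-xa}<\infty$ forces $\mu$ to be finite are harmless elaborations of the same argument.
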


\begin{proof}
  The assertion of the lemma follows by a combination of results from \cite{SchillingSongVondracek2012}. By  \cite[Theorem~3.2]{SchillingSongVondracek2012} the function $\varphi$ is a Bernstein function which by  \cite[Theorem~3.7]{SchillingSongVondracek2012} is equivalent to the fact that $\psi$ is a completely monotone function. By \cite[Theorem~1.4]{SchillingSongVondracek2012} it must be a Laplace  transform of a unique measure $\mu$ on $[0,\infty)$.
\end{proof}

\begin{example}[Feller's branching diffusion] \label{ex:Feller}
  Here, we have $E_X = \R_+$. The operator and its domain are given by
  \begin{align}
    \label{eq:22}
    G_X f(x) = \frac{1}{2} \frac{\partial^2}{\partial x^2} f(x), \qquad
    \mathcal D_X=C^2_b ([0,\infty)).
  \end{align}
  The state space of the dual process is $E_Y = \mathbb N$. For the duality function we choose -- similar to Example~\ref{ex:FV} -- $H(x,y) = x^y$ and we let $Y$ be the Markov jump process with generator
  \begin{align}
    \label{eq:1}
    G_Yf(y) = \binom y2 (f(y-1) - f(y)).
  \end{align}
  For $\beta(y) = \binom y2$, this gives
  \begin{align}
    \label{eq:2}
    G_XH(\cdot,y)(x)
    & \coloneqq G_Y H(x,\cdot)(y) + \beta(y) H(x,y)
      = \binom y2 x^{y-1} = \frac 12 x \frac{\partial^2}{\partial x^2} x^y,
  \end{align}
  which we recognize as the generator of Fellers's branching diffusion on $[0,\infty)$. Hence, for $x\in E_X$ and $t \ge 0$, in order to show \eqref{eq:toshow}, we need to find (the law of a random variable) $X_t$ such that for all $y \in E_Y$ we have
  \begin{align}
    \label{eq:3}
    \mathbb E_x[X_t^y]
    = \mathfrak{m}_y \coloneqq
    \mathbb E_y\Big[x^{Y_t} \exp\Big(\int_0^t \binom{Y_s}{2}\, ds\Big)\Big]
    = \sum_{n=0}^{y-1} \frac{t^n}{n!}x^{y-n} \prod_{k=0}^{n-1} \binom{y-k}2,
  \end{align}
  where we have used Lemma~\ref{L:jp} in the last step. (Note that the product of binomial coefficients is interpreted as $1$ in cases $y=1$ or $n=0$.) In order to find $X_t$, we will use Lemma~\ref{lem:bernstein-laplace}. Setting $\mathfrak{m}_0=1$ we have a sequence $(\mathfrak{m}_y)_{y=0,1,\dots}$ and for $\psi$ as in \eqref{eq:psi-lapl} we obtain
  \begin{align}
    \label{eq:4}
    \begin{split}
      \psi(\lambda)
      & = \sum_{y=0}^\infty \frac{(-\lambda)^y}{y!}
      \mathfrak{m}_y = 1 + \sum_{y=1}^\infty\sum_{n=0}^{y-1}
      \frac{(-\lambda)^y}{y!} \frac{t^n}{n!}x^{y-n} \,
      \prod_{k=0}^{n-1} \binom{y-k}2 \\
      & = 1 +\sum_{y=1}^\infty\sum_{n=0}^{y-1} \frac{(-\lambda x)^y}{n!}
      \Bigl(\frac{t}{2x}\Bigr)^n \, \frac{(y-1)!}{(y-n-1)!(y-n)!}\\
      & = 1 + \sum_{y=1}^\infty\sum_{n=0}^{y-1}
      \frac{(-\lambda x)^y}{(y-n)!}
      \Bigl(\frac{t}{2x}\Bigr)^n \binom{y-1}{y-n-1} = 1
      + \sum_{y=1}^\infty\sum_{n=1}^{y} \frac{(-\lambda x)^y}{n!}
      \Bigl(\frac{t}{2x}\Bigr)^{y-n} \binom{y-1}{n-1}\\
      & = 1 + \sum_{y=1}^\infty\sum_{n=1}^{y} \frac{(-\lambda t/2)^y}{n!}
      \Bigl(\frac{2x}{t}\Bigr)^{n} \binom{y-1}{n-1} = 1
      + \sum_{n=1}^{\infty} \frac{(2x/t)^n}{n!}
      \sum_{y=n}^\infty \Bigl(-\frac{\lambda t}{2}\Bigr)^y \binom{y-1}{n-1} \\
      & = 1 + \sum_{n=1}^{\infty} \frac{(2x/t)^n}{n!} \Bigl(-
      \frac{\lambda t/2}{1+\lambda t/2}\Bigr)^n = \sum_{n=0}^{\infty}
      \frac{1}{n!} \Bigl(-\frac{\lambda x}{1+\lambda t/2}\Bigr)^{n}
      = \exp\Bigl(-\frac{\lambda x}{1+\lambda t/2}\Bigr).
    \end{split}
  \end{align}
  Apart from several elementary manipulations we have used that $\sum_{k=i}^\infty \, (-a)^k \binom{k}{i} =  \frac{(-a)^i}{(1+a)^{i+1}}$ in the first equality of the last line. Now the function $\varphi(\lambda) =\frac{\lambda }{1+\lambda t/2}$ can be written in the form \eqref{eq:bernstein} with $a=b=0$ and $\nu(dr) = (t/2)^{-2} \exp(-r/(t/2))\, dr$. Indeed we have
  \begin{align}
    \label{eq:12}
    \int_{(0,\infty)} (1-e^{-\lambda r}) (t/2)^{-2} e^{-r/(t/2)} \,dr
    = \frac{4}{t^2} \Bigl(t/2 - \frac{t/2}{1+\lambda t/2}\Bigr)
    = \varphi(\lambda).
  \end{align}
  Now the existence of $X_t$ or more precisely the existence and uniqueness of the corresponding laws follows by Lemma~\ref{lem:bernstein-laplace} and we have obtained $(\mu_t)_{t\geq 0}$ as required in \eqref{eq:toshow0} and \eqref{eq:toshow}. We note that, Theorem~\ref{T1} cannot be applied directly since $H$ is unbounded. However, its generalization discussed in Remark~\ref{rem:genH} does apply and we obtain a Feller  process $(X_t)_{t\geq 0}$, which is the unique solution to the $(G_X,{\mathcal H}_X,x)$-martingale problem for all $x$ and setting $P^\mu=\int_{(0,\infty)} P^x \mu(dx)$, with $P^x$ the solution starting in $x$ we get the unique solution to the $(G_X,{\mathcal H}_X,\mu)$-martingale problem for all $\mu \in \mathcal{M}_1(\mathbb{R}^+)$. Since $X$ is a martingale, it has a \cadlag modification, and therefore continuous paths since $G_X$ is a second order operator.
\end{example}

\section{Outlook on non-locally compact state spaces}
\label{ss.outcomp}
Several of our results require $E_X$ to be compact. In particular, Proposition~\ref{l.480} is based on an application of the Riesz-Markov theorem, which works best for compact spaces. Also the proof of Theorem~\ref{T572} uses Proposition~\ref{l.480}. Hence in the cases of non-compact Polish state spaces we need to work with a suitable \emph{compactification}.

Typical examples of state spaces $E_X$ which are \emph{not} locally compact arise in models involving a continuum spatial component, genealogies or some function spaces. Examples where existence by duality was already obtained in the literature are \cite{Evans1997}, \cite{Dynkin1993} and \cite{BEV10}. We briefly discuss (i) \emph{historical processes} and (ii) \emph{genealogy-valued} processes.

Let $\mathbb G$ be a countably infinite abelian group. In (i), the state space is $\mathcal M(\mathcal D(\mathbb{R},\mathbb{G}))$, where $\mathcal D(\mathbb{R},\mathbb{G})$ is the set of \cadlag paths on $\mathbb G$; see \cite{DP91,D93}. The idea is to associate with every individual alive at time $t$ its \emph{path of descend} describing the geographical position of its ancestor at times $s \in [0,t]$ and extend the path before time $0$ and after time $t$ as a constant path. Then the state space is a (locally finite) measure on the set of such paths and hence we have in general a non-locally compact state space.

For (ii), the state space is called $\mathbb U^{\mathbb G}$, which is the set of (equivalence classes of) $\mathbb G$-marked metric measure spaces, i.e.\ triples $(X, r, \mu)$, where $(X,r)$ is a metric space (coding for the genealogy) and $\mu \in \mathcal M(X \times \mathbb G)$; see~\cite{GPW09,DGP12,GPWmp13,GSW,ggr_tvF14}. This leads to state spaces which are not $\sigma$-compact and not locally compact. In particular one needs to check tightness conditions to study convergence and path properties of stochastic processes.

In both cases, Theorem~\ref{T1} is applicable, but checking \eqref{eq:toshow0} and \eqref{eq:toshow} requires some additional work due to non-compactness of the state space. 
We note, that in studying such processes the technique of duality is very useful and applicable for our existence problem. We shall formulate below a criterion and a condition we need to verify in order to obtain the \emph{existence} of a solution. To check this condition one needs to develop methods to verify that the paths of the process in the compactified state spce remain in some subset whose preimage w.r.t.\ the embedding of the original space is contained in the original space itself. For fixed times $t$ this is known for genealogy-valued Fleming-Viot or Cannings models due to the so called strong duality. For all $t$, i.e.\ on the process level ongoing work in \cite{GKW23} suggests that this issue will be resolved in the context of genealogy process by the construction of the ancestral web and its dual.

Let us now discuss the announced approach useful for dealing with general \emph{Polish state spaces} $E_X$. The key is the following result, which reformulates and combines the strategies appearing in the literature e.g.\ in \cite{KSt01}. Note that we will be using this result for $\wh E$ compact.

\begin{proposition}[How to treat general state spaces]\label{l:490}
  Let $E$, $\wh E$ be Polish, $G: \mathcal D \subseteq \mathcal B(E) \to \mathcal B(E)$ and $\mathbb P_0 \in \mathcal M_1(E)$. Assume that $\Psi: E\to\wh E$ is injective and bi-measurable (i.e.\ $\Psi$ and    $\Psi^{-1}$ are measurable). Set
  \begin{align}
    \label{eq:comp0}
    \wh{\mathcal D}
    & \coloneqq \{\wh g_f \in \mathcal B(\wh E): f\in\mathcal D, \wh
      g_f|_{\Psi(E)} = f\circ \Psi^{-1}\},\\
    \label{eq:comp1}
    \wh G \wh g_f(\hat x)
    & \coloneqq
      \begin{cases}
        Gf(\Psi^{-1}(\hat x)), & \text{ if }\hat x \in \Psi(E), \\
        0, & \text{ otherwise,}
      \end{cases} \\
    \label{eq:comp2}
    \wh{\mathbb P}_0 & \coloneqq  \Psi_\ast \mathbb P_0.
  \end{align}
  \begin{enumerate}[(a)]
  \item If $X$ solves the $(G, \mathcal D, \mathbb P_0)$ martingale problem,
  then $\Psi(X)$ solves the $(\wh G, \wh{\mathcal D}, \wh{\mathbb
    P}_0)$ martingale problem. If, in addition, $\Psi$ is continuous and $X$ has \cadlag (continuous) paths, then $\Psi(X)$ has \cadlag (continuous) paths as well.
    \item If $\wh X$ solves the $(\wh G,  \wh{\mathcal D}, \wh{\mathbb P}_0)$ martingale problem and has paths in $\Psi(E)$, then $\Psi^{-1}(\wh X)$ solves the $(G, \mathcal D, \mathbb P_0)$ martingale problem. If, in addition, $\Psi^{-1}$ is continuous and $\wh X$ has \cadlag (continuous) paths, then $\Psi^{-1}(\wh X)$ has \cadlag (continuous) paths as well.
\end{enumerate}
\end{proposition}

Note that (b) can be used in various ways following literature to develop criteria which additionally have to be checked for the existence of solutions of the martingale problem.
Indeed if $E$ is locally compact, the above construction is well-known. In this case one can use the one-point compactification $\wh E \coloneqq E \cup \{\ast\}$ via $\Psi=$id; see e.g.\ Section~4.3 of \cite{EK86}.

An example for $\wh E$ in the case of \emph{not locally compact Polish space $E$} is as follows (see e.g.\ \cite[Section~3]{KSt01} and \cite{BhattKara93}): Assume that there is $\mathcal D' \subseteq \mathcal{D}$ countable and separating such that
\begin{align}
  \label{eq:10}
  \text{bp-closure of } \{(g, Gg): g\in\mathcal D'\} \supseteq
  \{(f, Gf): f\in\mathcal D\}.
\end{align}
Then consider the compact (in the product topology on $\mathbb R^{\mathbb N}$) set
\begin{align}
  \label{e463}
  \widehat{E}= \bigtimes_{g \in \mathcal D'} \bigl[-\sup \abs{g}, + \sup
  \abs{g}\bigr],
\end{align}
and use $\Psi: E_X \to \widehat{E}_X$ via
\begin{align}
  \label{e461}
  \Psi(x) = (g(x))_{g \in \mathcal D'}.
\end{align}
Since $\mathcal D'$ is separating, $\Psi$ is injective. If $\mathcal D' \subseteq \mathcal C_b(E)$, $\Psi$ is continuous, and if $\mathcal D'$ is convergence determining then $\Psi^{-1}$ is continuous (on $\Psi(E)$).

The question is now how to work with $\widehat E$. Suppose we want to use Proposition~\ref{l.480} or Theorem~\ref{T572}, writing $E_X$ and $\wh E_X$ for the state space of the process $X$. In case we want to use one of these results for showing \eqref{eq:toshow}, we can make use of Proposition~\ref{l:490} (assuming $\wh E_X$ is compact) and $\Psi: E_X \to\wh E_X$ is as in Proposition~\ref{l:490} as follows: We use $\mathcal{D}' \subset \{H(\cdot,y) : y \in E_Y\}$. Then we can extend the duality w.r.t\ to function $H$ to a duality w.r.t. function $\wh H$ on $\wh E_X$, $\wh H: \wh E_X \times E_Y \to \mathbb R$, satisfying (i), (ii) of Theorem~\ref{T1} and
\begin{align}
  \label{eq:tosq}
  \wh H(\Psi(x), y) = H(x,y), \qquad x\in E_X, y\in E_Y.
\end{align}
Then there is a family of transition kernels $(\wh \mu_t)_{t\geq 0}$ from $\wh E_X$ to $\wh E_X$ satisfying \eqref{eq:toshow0} and\eqref{eq:toshow} (which can be shown using e.g.\ Proposition~\ref{l.480} or Theorem~\ref{T572} due to compactness of
$\wh E_X$), if additionally we have the following containment property of a solution starting in $E_X$: 
\begin{align}
  \label{eq:tosh}
  \wh \mu_t(\Psi(x), \Psi(E_X))=1 \text{ for all $t\geq 0, x\in E_X$.}
\end{align}
We define $\mu_t(x,\cdot) = \Psi^{-1} _\ast \wh \mu_t(\Psi(x),.)$, for all $t\geq 0, x \in E_X$ and $y\in E_Y$. Then, \eqref{eq:toshow0} holds for $(\mu_t)_{t\geq 0}$, since $\Psi$ and $\Psi^{-1}$ are measurable (on $\Psi(E_X)$); see Section~3 of \cite{KSt01}. Moreover, by using successively \eqref{eq:tosq} and \eqref{eq:toshow} on $\wh E_X$, then the definition of $\mu_t$ and finally \eqref{eq:tosq} again we obtain
\begin{align}
  \label{eq:9}
  \begin{split}
    \mathbb E_{y}\Bigl[H(x,Y_t)
    \exp\Bigl(\int_0^t \beta(Y_s)\, ds\Bigr)\Bigr]
    & = \mathbb E_{y}\Bigl[\wh H(\Psi(x),Y_t)
      \exp\Bigl(\int_0^t \beta(Y_s)\, ds\Bigr)\Bigr] \\
    & = \int \wh H(x', y) \wh \mu_t(\Psi(x), dx') \\
    & = \int \wh H(\Psi(x'), y) \mu_t(x, dx') \\
    & = \int H(x', y) \mu_t(x, dx')
  \end{split}
\end{align}
which shows \eqref{eq:toshow} for $(\mu_t)_{t\geq 0}$. We obtain the following corollary.

\begin{corollary}
    Let $E_X, E_Y, H, G_Y, \beta, \mathcal H_X$ and $\mathcal H_Y$ be as in Theorem~\ref{T1}, and let Theorem~\ref{T1}(i) hold. In addition, let $E \coloneqq E_X$ and $\wh E$ be as in Proposition~\ref{l:490}. If $\wh X$ satisfies Proposition~\ref{l:490}(ii) (in particilar \eqref{eq:tosh}, then \eqref{eq:toshow0} and \eqref{eq:toshow} hold for $\wh X$, existence and uniqueness of a solution to the $G_X$-martingale problem follows.
\end{corollary}

As discussed in Remark~\ref{rem:R1} we need to \emph{check the regularity of paths separately}. Here it means that we have to check that \eqref{eq:tosh} holds as an additional condition and we use it in the dual process or an extension of it. The first step would be to establish \eqref{eq:tosh} for \emph{fixed} $t$ and to then in a second step exclude \emph{exceptional points} of the paths. This can sometimes be done using the dual process $Y$ or rather its extension to a strong duality. 

Then we find that the $(G_X, \mathcal H_X)$ martingale problem has a unique solution which has a \cadlag modification, since for general state spaces, Theorem~4.3.6 in \cite{EK86} states the existence of a \cadlag modification of the $(G_X,\mathcal H_X)$ martingale problem provided the compact containment condition holds.


\subsubsection*{Acknowledgements}
We thank Tom Kurtz for useful suggestions concerning the non-locally compact case and pointing out a reference. This research was supported by the DFG priority program SPP\,1590 through the grants Pf672/8-1 to PP and Gr876/16-1,2 to AG. AG was also supported by the DFG-Grant Gr876/17-1. PP is partially funded by the Freiburg Center for Data analysis and Modeling (FDM). 

\subsubsection*{Declaration of Generative AI and AI assisted technologies in the writing process}
During the preparation of this work the author(s) used no generative AI tools.


\end{document}